\numberwithin{equation}{section}
\theoremstyle{plain}
\newtheorem{Th}{Theorem}[section]
\newtheorem{Lemma}[Th]{Lemma}
\newtheorem{Cor}[Th]{Corollary}
\newtheorem{Prop}[Th]{Proposition}
 \theoremstyle{definition}
\newtheorem{Rem}[Th]{Remark}
\newtheorem{?}[Th]{Problem}
\begin{document}

\title{\bf {On the Steinberg character of a reductive p-adic group}}
\author[K. BETTAIEB]{KAREM BETTAIEB*}
\author[I. HICHRI]{IMED HICHRI**} 
\begin{abstract} The aim of this paper is to give a generalization of the construction of the Steinberg tempered character on a connected reductive p-adic group. We prove that this character is invariant by the weak restriction of the Jacquet module by analogy to finite reductive groups.
\end{abstract}

\maketitle
\hspace{0.8cm}\small{Keywords. Parabolic induction, duality, weak constant term, character of Steinberg.}\\

\hspace{0.8cm}\textsl{\bf{2010 Mathematics Subject Classification:}} \textsl{11E95, 20G05, 20G15, 22D12.}
\maketitle
\vskip 2em
\let\thefootnote\relax\footnotetext{\\
*Taief University, Kindom of Arabia Saudia.\\
E.mail: Karem@tu.edu.sa\\
**Facult\'e des sciences de Sfax, d\'epartement de math\'ematiques,
route de Soukra, 3038 Sfax, Tunisie.\\
E.mail: imedhichri69@gmail.com}
\maketitle
\vskip 2em

\large{\section{Introduction}} 
Let G be a connected, reductive p-adic group, and $\mathcal{V}(G)$ the set of tempered virtual characters of $G$ [1], [14], that is the set of finite linear combinations of characters of irreducibles tempered representations of G. In [6], K. Betta\"ieb has defined an involution on $\mathcal{V}(G),$ denoted by $D^{t}_{G}$ similar to the Curtis-Alvis duality for the characters of finite reductive groups [10], and to A.M.\hspace{0,1cm}Aubert [3] in the Grothendieck group of the category of smooth finite length representations
of reductive p-adic group. If $M$ is a Levi subgroup of $G,$ [6] this involution commutes on the one hand with the induction functor,  $ i_{G, M}: \mathcal{V}(M)\to\mathcal{V}(G)$, and on the other hand with the Jacquet's  weak restriction functor, $r^{t}_{M,G}: \mathcal{V}(G) \to \mathcal{V}(M)$ [14].\\

Let $P = MN$ be a standard parabolic subgroup of G, and  $\sigma\in \Pi_{2}(M),$ the set of equivalence class of discrete series representation of M. We denote by $i_{G,M}(\sigma)$ the set of equivalence
class of the tempered representation of G parabolically induced from $\sigma$, $\Pi_{\sigma}(G)$ the set of all its irreducible constituents, and $R_\sigma:= R^G_\sigma$ its corresponding Knapp-Stein R-group [15]. It is a finite groupe defined in terms of Plancherel measures, and is the key of the determination of the intertwining algebra $\mathcal{C}(\sigma)$  which is isomorphic to $\mathbb{C}[R_\sigma]_\eta$ the complex algebra of $R_\sigma$, with multiplication twisted by a cocycle $\eta$ . Let $1\to Z_{\sigma}\to \widetilde{R_\sigma} \to R_{\sigma}\to 1$, be a fixed central extension of $R_\sigma$, on which the cocycle associated to the intertwining operators splits. \hspace{0,1cm}J.\hspace{0,1cm}Arthur\hspace{0,1cm}[1], showed that there is a bijection: $\rho \longleftrightarrow \pi_\rho $ between $\Pi_\sigma(G)$ and $\Pi(\widetilde{R_\sigma}, \chi_{\sigma})$, the set of equivalence classes of irreducible
representations of $\widetilde{R_\sigma}$ with $ \chi_\sigma$ as $ Z_{\sigma}$-central character. Given any $\rho\in\Pi(\widetilde{R_\sigma}, \chi_{\sigma})$, we denote by $\theta_\rho$ its character and by $\Theta_{\pi_{\rho}}$ the character of $\pi_\rho$, then this bijection induces an isomorphism  $I^G$ between the respective characters of these representations, i.e, $I^{G}(\theta_{\rho}) = \Theta_{\pi_{\rho}}$, we say that $\Theta_{\pi_{\rho}}$ \textbf{corresponds} to $\theta_{\rho},$ and that $D^t_G(\Theta_{\pi_{\rho}}),$ is the dual of $\Theta_{\pi_{\rho}}$.\\

Similar to the duality $D^t_G$ on $\mathcal{V}(G)$, we establish another duality on the space $\mathbb{C}(\Pi(\widetilde{R_\sigma}, \chi_{\sigma}))$ of characters of irreductibles representations in $\Pi(\widetilde{R_\sigma}, \chi_{\sigma})$. We prove that the sign application $\xi_{\widetilde{R_\sigma}} $ of $\widetilde{R_\sigma}$, defined by $\xi_{\widetilde{R_\sigma}}(\tilde{r}):= (-1)^{\dim(A_{L_r})},$ where $\tilde{r}\in\widetilde{R_\sigma}$ is the inverse image of $r\in R_\sigma$, and $L_r$ is a Levi subgroup of G  which verifies  certain condition, is the character of an irreducible representation in $ \Pi (\widetilde{R_\sigma}, \chi_{\sigma}), $.\\

The second result of this paper is to define a special character in  $\mathcal {V}(G)$, similar to that of Steinberg,(c.f. [12], [9, \S 4.7]), which will be called \textbf{the Steinberg character associated to a pair $(M, \sigma)$}, with $M$ is a Levi subgroup of $G$ and $\sigma\in\Pi_2(M)$ (such pairs are called discrete pairs), and denoted by $St_{(M,\sigma)}^{G}$. We prove  that it is an irreducible character, that it corresponds to $\xi_{\widetilde{R_\sigma}},$ elliptic if and only if the group $ R_{\sigma} $ is elliptic, and that its dual is $\Theta_{\pi_{1}},$ where $\pi_{1}\in \Pi_{\sigma}(G)$ corresponds to the trivial representation. In addition if $L$ is a Levi subgroup of $G$, containing $M$ and satisfies certain condition then: $$r^{t}_{L,G}(\sigma)(St_{(M,\sigma)}^{G}) = St_{(M,\sigma)}^{L}$$ 
where $r^{t}_{L,G}(\sigma)$ is the projection of $r^{t}_{L,G}$ on $\Pi_\sigma(L)$.     
\maketitle
\vskip 2em

\large{\section{Preliminaires}}
Let F be a locally compact, non-discrete, nonarchimedean field of characteristic
zero, and $\mathbf{G}$ is a connected reductive algebraic
group over $F$. Let $G:= \mathbf{G} (F)$ be the group of $F$-rational points of $ \mathbf{G}$, and $A_G$ the split component of $G$, i.e, the maximal F-split torus lying in the center of $G$. We fix a Levi subgroup  $M_{0}$ of a certain minimum parabolic subgroup $ P_ {0} $ of G defined over F, and $A_0$ its split component. Any parabolic subgroup $P$ of $G$ which is defined over F, has a unique Levi decomposition, $P=M_P\ltimes N_P= M_P.N_P$, where $M_P$ is connected, reductive subgroup of $G$, and $N_P$ is the unipotent radical of $P$, if P contains $P_0$, we say that it is standard. The Levi subgroups of G, are the centralizers in $ G $ of their splits components, therefore they are uniquely determined by these components and vice versa.\\

 Let $\Phi(G, A_0)$ be the set of reduced roots of $A_0$ in the Lie algebra of G, and $\Delta\subset\Phi^{+}(G, A_0)$ the collection of simple roots, then the conjugacy classes of standards parabolics subgroups of
$G$ are in one to one correspondence with subsets of $\Delta$, [2, Lemma $1.2$], written as follows:$$I\subset\Delta\longleftrightarrow P_I= M_I N_I$$
such that: $$A_{M_I}= A_I= (\bigcap_{\alpha\in I} \ker\alpha \cap A_0)^\circ,~\textsl{and}~ M_I = Z_G(A_I)$$\\
where $Z_G(A_I)$ is the centralizer of $A_I$ in $G.$ We refer the reader to [8] for more details. We write $\mathcal{L}$ for the finite set of Levi subgroups of G. Given any $M\in\mathcal{L}$, with split component $A_M$, we write $\mathcal{L}(M):= \mathcal{L}^G(M)$ for the set of Levi subgroups of $G$ which contain $M$, and $\mathcal{P}(M)$ for the set of parabolic subgroups $P$, with Levi component $M_P= M$, any $P\in\mathcal{P}(M)$ admits a unique opposite parabolic subgroup $\overline{P}= M\overline{N}_P$, such that $P\cap\overline{P}= M$\\

For a fixed $M\in\mathcal{L}$, let $X(M)_F$ be the group of F-rational characters of $M$, and $$\mathfrak{a}_M= \textsl{Hom}( X(M)_F, \mathbb{R})$$ $$\mathfrak{a}^*_M= X(M)_F\otimes_{\mathbb{Z}} \mathbb{R}\hspace{0.7cm}$$
then $\mathfrak{a}_M$ is the real Lie algebra of $A_M$, and $\mathfrak{a}^*_M$ is its dual. We denote by $ W_0:=W_0^G:= N_G(A_0)/M_0$ the Weyl group of $G$ with respect to $A_0$, and $W:= W^G$ for the Weyl group of $G$ with respect to $A_M$, where $N_G(A_0)$ is the normalizer of $A_0$ in $G$. \\ 

An element $ x \in G $ is called  regular if $ D_{G} (x) \not = 0 $, where $ \hspace {0,2cm} D_{G} $ is the standard discriminant factor defined in [12], we denote by $G_{\textsl{reg}}$ the set of regulars elements of $G$. An element of $G_{\textsl{reg}}$ is elliptic if its centralizer is compact modulo $ A_ {G} $, we write $ G_ {\textsl{ell}} $ for the set of regular elliptic elements of $G$. 
Let $ \Pi (G)$ be the set of  equivalence classes of irreducible tempered representation of $G$, and $ \Pi_ {2} (G) $ the
subset of $ \Pi (G)$ consisting of discretes series representations of $G$. If there is no confusion, we do not make a
distinction between each equivalence class and its representative. Given any $\pi\in\Pi(G)$, we write $ \Theta _ {\pi} $ for the character of $ \pi$ which is locally integrable function on $G$ [12], and $ \Theta _ {\pi}^ {e} $ for its restriction to
$G_ {\textsl{ell}}$. The representations $\pi$ is said to be elliptic, if $ \Theta _ {\pi}^ {e} \neq 0$\\
\subsection{The weak constant term}  
Let $(\pi, V)$ be an admissible representation of $G$ and $P\in\mathcal{P}(M)$. For any quasi-character $\chi$ of $A_G$, we denote by:
$$V_\chi= \lbrace v\in V;~ \exists d\in\mathbb{N},~\forall a\in A_G, (\pi(a)- \chi(a).Id)^{d} v = 0\rbrace$$
and $$ \textsl{Exp}(\pi)=\lbrace \chi;~ V_\chi\neq 0\rbrace$$
$\textsl{Exp}(\pi)$ is called the set of exponent of $\pi$, we get then: $$V= \bigoplus_{\chi\in\textsl{Exp}(\pi)} V_\chi.$$ let $\overline{P}= M\overline{N}$ the opposite parabolic subgroup of P, and $V_{\overline{N}}:= V/{ V(\overline{N})}$ where $V(\overline{N})$ is the $M-$invariant subspace of $V$ genereted by $\lbrace \pi(\overline{n})v- v;~ \overline{n}\in\overline{N}, v\in V\rbrace$. For all $m\in M$ and $v\in V$, we define:
$$ \pi_{\overline{N}}(m) p(v)= {\delta_{\overline{P}}(m)}^{-1/2} p (\pi(m)v)$$
with $p: V\longrightarrow V_{\overline{N}}$ is the canonical projection and $\delta_{\overline{P}}$ is the modular function of $\overline{P}.$ The representation $(\pi_{\overline{N}}, V_{\overline{N}})$ is called the normalized Jacquet module of $(\pi, V)$ corresponding to $\overline{P}$, which is admissible and of finite length $[17, \S 2.3]$. We write $(\Theta_\pi)_M:= \Theta_{\pi_{\overline{N}}}$, for the character of $\pi_{\overline{N}}$, and we call it
the constant term  of $\Theta_\pi$ along $\overline{P}$, if $\pi$ is tempered, we write $(\Theta_\pi)^{w}_M:= \Theta_{\pi^{w}_{\overline{N}}}$, for the character of $\pi^{w}_{\overline{N}}$, which is the maximum tempered quotient of the normalized Jacquet module $ \pi _ {\overline{N}} $ corresponding to $ \overline {P}$, defined by:$$V^{w}_{\overline{N}}= \bigoplus_{\chi\in\textsl{Exp}(\pi_{\overline{N}})\cap \widehat{A}}
V_{\overline{N}, \chi}$$
we call $(\Theta_\pi)^{w}_M$, the weak constant term of $\Theta_\pi$ along $\overline{P}$.\\
\subsection{The set \textsl{\bf{$\mathcal{V}(G)$}}}
 A character $ \Theta $ of $G$, is called virtual, if there are a finite number of $ \pi_{1}, ..., \pi_{k} \in \Pi (G) $ and $ c_ {i} \in {\mathbb {C}}$, for all $1\leq i\leq k$, such that:
$$
\Theta = \sum_{1 \leq i \leq k} c_ {i} \Theta _ {\pi_ {i}}.
$$
We denote by $\mathcal{V}(G)$ the set of such characters, which is the set of finite linear combinations of characters of irreducible tempered representations of $G$, we say that a character in $\mathcal{V}(G)$ is irreductible if it is the character of an irreductible representation in $\Pi(G)$. Let $\mathcal{V}_{st}(G)$ be the subset of $\mathcal{V}(G)$ formed by so-called supertempered characters [13], and defined according to [14] by:
\begin{equation}
\mathcal{V}_{st}(G)=\lbrace \Theta\in\mathcal{V}(G);~\Theta^w_L= 0, \forall L\in\mathcal{L}, L\neq G\rbrace.
\label{super}
\end{equation}
  $E_G$ is the endomorphism of $\mathcal{V}(G)$, which extends the character of $\pi$ to the character of its contragredient, defined by: $$E_G(\Theta)= \sum_{1 \leq i \leq k} c_ {i} \Theta _ {\pi_ {i}^{\vee}}$$
where $\pi_ {i}^{\vee}\in\Pi(G)$ is the contragredient of $\pi_ {i}$.
Let $\Theta'\in\mathcal{V}(M)$ then by linearity we define the induction functor:  $$i_{G,M}: \mathcal{V}(M)\longrightarrow\mathcal{V}(G).$$  
We define on the other hand using the constant and weak constant term, the functors of restriction and weak restriction of Jacquet:\\
$$ r_ {M, G}: \mathcal {V} (G) \longrightarrow \mathcal {V} (M), ~ \Theta \longmapsto r_ {M, G} (\Theta)= \Theta_M= \sum_ {1 \leq i \leq k} c_ {i} (\Theta _ {\pi_ {i}})_{M}.
$$ 
$$r^{t}_{M,G}:\mathcal {V}(G)\longrightarrow \mathcal {V}(M),~\Theta \longmapsto r^{t}_{M,G}(\Theta)= \Theta^{w}_{M}= \sum_ {1 \leq i \leq k} c_ {i} {(\Theta _ {\pi_ {i}})}_{M}^{w}.$$\\

Let $ M, L \in \mathcal {L} $, we denote by: $$W ^ {M, L} = \lbrace w \in W_0:~ w (M \cap P_0) \subset P_0,~ w ^ {- 1} (L \cap P_0) \subset P_0\rbrace $$
the subgroup of $ W_0$, such that in each double class $ W_0^L w W_0^M $, there exist a unique element of $ W^{M, L}.$ The following theorem due to I.N. Bernstein and A.V. Zelevinsky [4], gives a good description of the composition of functors $r_{L, G}$ and $i_{G, M}$.\\
\begin{Th}\textsl{ $[4]$~If $ M, L \in \mathcal {L} $, then $r_{L, G} \circ  i_{G, M}$ has a filtration consisting of subfunctors:$$i_{L,L_{w}}\circ w\circ r_{w^{-1}L_{w},M}$$
where $L_w= wM\cap L\subset L,$ and $w\in W ^ {M, L}.$\\}
\end{Th}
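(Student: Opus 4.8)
The plan is to derive the statement exactly as in Bernstein--Zelevinsky, from the geometry of a double coset space together with the support filtration of a parabolically induced module. Fix $P\in\mathcal{P}(M)$ and $Q\in\mathcal{P}(L)$, written $P=MN_P$ and $Q=LN_Q$. First I would invoke the Bruhat decomposition $G=\bigsqcup_w Q\,w\,P$, with $w$ running over representatives of the double cosets $W_0^L\backslash W_0/W_0^M$; each double coset is locally closed in $G$ and its closure is a union of lower ones for the Bruhat order. The preliminary combinatorial point to pin down is that each such double coset contains a unique element of minimal length, and that these minimal-length representatives are precisely the elements of $W^{M,L}$; this is the same bookkeeping that underlies the definition of $W^{M,L}$ recalled above, and it is what identifies the index set of the filtration.

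Next I would realize $i_{G,M}(\tau)$, for $\tau$ an admissible representation of $M$ of finite length, as a space of sections over the compact homogeneous space $G/P$, on which $Q$ acts with finitely many orbits, namely the images of the $Q\,w\,P$. Filtering sections by their support on $Q$-stable closed unions of orbits (numbered compatibly with the Bruhat order) gives a finite filtration of $i_{G,M}(\tau)$ by $Q$-submodules; applying the exact normalized Jacquet functor along $\overline{N}_Q$, which is the functor on representations underlying $r_{L,G}$ in the conventions fixed in the preliminaries, turns it into a finite filtration of $r_{L,G}\bigl(i_{G,M}(\tau)\bigr)$ by $L$-submodules, in which the subquotient attached to one orbit $Q\,w\,P/P$ is the module of sections supported there.

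The third step is to identify that subquotient. Examining the relevant stabilizers, $Q\cap wPw^{-1}$ is a parabolic subgroup of $L$ with Levi component $L_w=wMw^{-1}\cap L$, while $w^{-1}Qw\cap M$ is a parabolic of $M$ with Levi component $M\cap w^{-1}Lw=w^{-1}L_ww$. Transporting the space of sections through these identifications exhibits the subquotient as: the normalized Jacquet module of $\tau$ along $w^{-1}L_w$, that is $r_{w^{-1}L_w,M}(\tau)$; then its transport to a representation of $L_w$ through the isomorphism induced by $\mathrm{Ad}(w)$; then normalized induction from $L_w$ up to $L$ --- in other words $i_{L,L_w}\circ w\circ r_{w^{-1}L_w,M}$, as asserted. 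Since all representations in sight have finite length, the existence of the filtration is exactly the claim, and passing to characters it yields the matching additive identity of operators $\mathcal{V}(M)\to\mathcal{V}(L)$.

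The step I expect to be the main obstacle is the normalization bookkeeping in the identification just described: one must verify that the half-powers of modulus characters built into the normalized $i$'s and $r$'s, together with the modulus of $Q\cap wPw^{-1}$ inside $P$ and that of its opposite inside $\overline{N}_P$, all cancel, so that the subquotient is exactly the normalized composite with no residual twist by an unramified character. A secondary subtlety, specific to this paper's conventions, is that $r_{L,G}$ is defined through the \emph{opposite} parabolic $\overline{Q}$, so one must either run the whole support filtration with $\overline{Q}$ in place of $Q$ throughout, or insert at the end the comparison between the Jacquet modules along $Q$ and along $\overline{Q}$; and getting the order of the filtration right --- which orbits give submodules and which give quotients, governed by the Bruhat order and by the choice of $N_Q$ versus $\overline{N}_Q$ --- is part of that same care.
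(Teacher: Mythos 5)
The paper offers no proof of this statement at all---it is quoted verbatim from Bernstein--Zelevinsky [4]---and your sketch is precisely the argument of that source: Bruhat double cosets $W_0^L\backslash W_0/W_0^M$ with minimal-length representatives identified with $W^{M,L}$, the support filtration of $i_{G,M}(\tau)$ over the finitely many $Q$-orbits on $G/P$, exactness of the normalized Jacquet functor, identification of the graded pieces as $i_{L,L_w}\circ w\circ r_{w^{-1}L_w,M}$, and the modulus-character bookkeeping for normalization. The only slip is cosmetic: it is $L\cap wPw^{-1}$ (not $Q\cap wPw^{-1}$, which is not contained in $L$) that is a parabolic subgroup of $L$ with Levi $L_w$; otherwise your account matches the cited proof, including your correct remark that the paper's convention of taking Jacquet modules along the opposite parabolic only permutes which orbits give submodules versus quotients and leaves the set of subquotients, and hence the character identity $(2.2)$ actually used in the paper, unchanged.
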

~~\\
In particular, in the appropriate space of virtuals characters, if $\Theta'\in\mathcal{V}(M)$ then:\\
\begin{equation} 
 r_{L,G}\circ(i_{G,M}(\Theta'))=\sum_{w\in W^{M,L}} i_{L,L_{w}}(r_{L_{w},wM}(w\Theta')).
 \end{equation}
 \subsection{The duality on \bf{$\mathcal{V}(G)$}}
In [6], K. Betta\"ieb has defined an operator $ D ^ {t} _ {G} $ on the space $ \mathcal {V} (G) $ as follows:
 \begin{equation}
D^{t}_{G}= \sum_{M \in \mathcal {L}}(-1)^{\dim(A_{M})}\hspace{0,2cm} i_{G,M}\circ r^{t}_{M,G}.
\end{equation}
\begin{Th} \textsl{ $[6]$ The operator $D^t_G$ has the following properties:}\begin{enumerate}
\item[1-]\textsl{ If $L\in\mathcal{L}$, then :} $$D^t_G\circ i_{G,L} = i_{G,L}\circ D^t_L~ \textsl{and}~ r^t_{L,G}\circ D^t_G= D^t_L\circ
r^t_{L, G}$$
\item[2-] \textsl{$D^t_G$ is an involution, i.e ${D^t_G}^2= id.$}
\item[3-]\textsl{ If $\pi\in\Pi_2(G),$ then $D^t_G(\Theta_{\pi})= \pm \Theta_\pi$ }
\item[4-]\textsl{$D^t_G$ takes irreducible representations to irreducible representations.}
\item[5-]\textsl{If $M\in\mathcal{L}$, and $\sigma\in\Pi_2(M)$, then $D^t_G$ preserves the irreducible characters of $i_{G, M}(\Theta_\sigma)$ up to sign.}
\end{enumerate}
\end{Th}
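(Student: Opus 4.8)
The five assertions are those of [6]; the strategy we would follow is as follows. The argument rests on two ingredients. The first is a tempered (``weak'') refinement of the geometric lemma of Theorem 2.1: for $M,L\in\mathcal{L}$ and $\Theta'\in\mathcal{V}(M)$ one should have
$$r^{t}_{L,G}\bigl(i_{G,M}(\Theta')\bigr)=\sum_{w\in W^{M,L}}i_{L,L_{w}}\bigl(r^{t}_{L_{w},wM}(w\Theta')\bigr),$$
obtained from (2.2) by passing, term by term, to maximal tempered quotients and using that parabolic induction preserves temperedness and is exact. The second is the Möbius-type identity on the lattice of Levi subgroups,
$$\sum_{\substack{L\in\mathcal{L}\\M\subseteq L\subseteq G}}(-1)^{\dim A_{L}}=0\qquad(M\neq G),$$
coming from the Boolean structure of the poset of standard Levi subgroups containing a fixed one. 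Already the first point is not entirely routine, since one must verify that taking the maximal tempered quotient of a normalised Jacquet module is compatible with the Bernstein--Zelevinsky filtration; this is where [14] is invoked.

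We first record assertion 3, which also feeds the others. Let $\pi\in\Pi_{2}(G)$. By Casselman's square-integrability criterion the exponents of the Jacquet module of $\pi$ along any proper parabolic $\overline{P}=M\overline{N}$ are non-unitary on $A_{M}$, so $\pi^{w}_{\overline{N}}=0$ and $r^{t}_{M,G}(\Theta_{\pi})=0$ for every $M\subsetneq G$. In (2.3) only the term $M=G$ survives, whence $D^{t}_{G}(\Theta_{\pi})=(-1)^{\dim A_{G}}\Theta_{\pi}$; the same computation gives $D^{t}_{G}(\Theta)=(-1)^{\dim A_{G}}\Theta$ for every $\Theta\in\mathcal{V}_{st}(G)$.

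For assertion 1 one expands $D^{t}_{G}\circ i_{G,L}=\sum_{M\in\mathcal{L}}(-1)^{\dim A_{M}}\,i_{G,M}\circ r^{t}_{M,G}\circ i_{G,L}$, inserts the weak geometric lemma for $r^{t}_{M,G}\circ i_{G,L}$, uses transitivity of induction, and reindexes the resulting double sum over pairs of Levi subgroups; grouping the terms by the Levi $M_{w}=wL\cap M$ and collapsing the inner alternating sum by the Möbius identity leaves exactly $i_{G,L}\circ D^{t}_{L}$. The identity $r^{t}_{L,G}\circ D^{t}_{G}=D^{t}_{L}\circ r^{t}_{L,G}$ then follows either by the same computation or, more economically, by transposition: for the natural pairing on virtual characters $i_{G,M}$ and $r^{t}_{M,G}$ are adjoint up to the contragredient operator $E_{G}$, so $D^{t}_{G}$ is self-adjoint up to $E_{G}$ and the second identity is the transpose of the first. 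Assertion 2 is the analogous but more delicate computation of $(D^{t}_{G})^{2}$: after applying the weak geometric lemma together with transitivity of $i$ and of $r^{t}$, all off-diagonal contributions cancel by the Möbius identity, leaving only the $M=L=G$, $w=1$ term, which is the identity. (Alternatively, by assertion 1 the operator $(D^{t}_{G})^{2}-\mathrm{id}$ maps $\mathcal{V}(G)$ into $\mathcal{V}_{st}(G)$, where, by assertion 3, it vanishes; one then checks this forces it to be zero.) This cancellation is the combinatorial core and follows the pattern of Curtis--Alvis duality and of Aubert's involution.

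Assertions 4 and 5 lie deeper. Fix a discrete pair $(M,\sigma)$. Since $i$ and $r^{t}$ preserve cuspidal supports, $D^{t}_{G}$ stabilises the finite-dimensional subspace of $\mathcal{V}(G)$ spanned by $\{\Theta_{\pi_{\rho}}:\rho\in\Pi_{\sigma}(G)\}$, and by assertions 1 and 3 one has $D^{t}_{G}\bigl(i_{G,M}(\Theta_{\sigma})\bigr)=\pm\,i_{G,M}(\Theta_{\sigma})$. To see that $D^{t}_{G}$ permutes the $\Theta_{\pi_{\rho}}$ themselves up to sign, transport its action on this subspace through Arthur's isomorphism $I^{G}$ to the character ring of $\Pi(\widetilde{R_\sigma},\chi_{\sigma})$, i.e. to the twisted group algebra $\mathbb{C}[R_\sigma]_{\eta}$, and identify the transported involution with multiplication by a one-dimensional character: such an operation sends irreducibles to irreducibles, which is assertion 5, and assertion 4 follows since every irreducible tempered character of $G$ occurs as some $\Theta_{\pi_{\rho}}$. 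The main obstacle --- and the technical core of [6] --- is precisely this identification of the transported involution; it rests on Knapp--Stein theory and Arthur's parametrisation, and it is sharpened in the present paper, where the one-dimensional character in question is identified with the sign character $\xi_{\widetilde{R_\sigma}}$.
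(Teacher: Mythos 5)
A preliminary remark: the paper gives no proof of this theorem at all --- it is quoted from [6] --- so your sketch can only be weighed against the strategy of that reference and against the internal logic of the present article. For assertions 1--3 your outline is the expected one: the Curtis--Alvis/Aubert-style alternating-sum computation, the M\"obius identity on the poset of Levi subgroups (as in [16, Lemme 1.2.3], which the paper itself invokes for the analogous operator on the $R$-group side), and Casselman's criterion giving $r^{t}_{M,G}(\Theta_\pi)=0$ for $\pi\in\Pi_2(G)$ and $M\neq G$. You correctly flag the one delicate ingredient, the weak (tempered) form of the geometric lemma, as resting on [14]; and your alternative derivation of assertion 2 can indeed be completed using the decomposition of any $\Theta\in\mathcal{V}(G)$ into induced supertempered characters [5, corollaire 7], exactly the tool the paper uses in Lemma 2.3(3). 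Up to these (available) inputs, parts 1--3 of your proposal are sound in outline.

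The genuine gap is in assertions 4 and 5, which are the deepest parts of the statement. You propose to transport $D^{t}_{G}$ through $I^{G}$ to $\mathbb{C}[R_\sigma]_\eta$ and then to ``identify the transported involution with multiplication by a one-dimensional character''; but that identification is not an available fact, and you give no argument for it --- you only name it as the technical core. Worse, within the present paper the logical arrow points the other way: Proposition 3.3 uses precisely Theorem 2.2(4)--(5) to conclude that $D_{\widetilde{R_\sigma},\chi_\sigma}(\theta_\rho)$ is irreducible, and Remark 4.2 uses it to see that $\xi_{\widetilde{R_\sigma}}$ is an irreducible character; so taking these $R$-group facts as input makes your route for 4--5 circular. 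To make it non-circular you would need an independent proof of a Solomon-type identity, namely that $\sum_{L\in\mathcal{L}_A(M)}(-1)^{\dim(A_L)}\,\textsl{Ind}^{\widetilde{R_\sigma}}_{\widetilde{R^{L}_\sigma}}\circ\textsl{Res}^{\widetilde{R_\sigma}}_{\widetilde{R^{L}_\sigma}}$ equals tensoring with $\xi_{\widetilde{R_\sigma}}$ on $\Bbb C(\Pi(\widetilde{R_\sigma},\chi_\sigma))$. That statement is classical for a Coxeter group with its parabolic subgroups, but $R_\sigma$ need not be a Coxeter group and the family $\{\widetilde{R^{L}_\sigma}\}_{L\in\mathcal{L}_A(M)}$ is not a priori of that shape, so it requires its own proof (and computing only the dual of the trivial character, as the paper does in Proposition 3.3(2), is strictly weaker than what you need). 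Note also that even the commutation $D^{t}_{G}\circ I^{G}=I^{G}\circ D_{\widetilde{R_\sigma},\chi_\sigma}$ that your argument presupposes is only established in Section 3 via Lemma 3.1 and the diagram from [6, Th\'eor\`eme 6], so it must be supplied before your transport step can even begin. As written, assertions 4 and 5 are asserted rather than proven.
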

~~\\
We can also add some additional properties of $D^t_G$ illustrate in the next lemma.
\begin{Lemma}\textsl{Let $\pi$ be an irreducible tempered representation of $G$}
\begin{enumerate}
\item[1-]\textsl{If $\chi$ is a character of $G$. Then : $$D^t_G(\Theta_{\chi\otimes\pi})= \chi\otimes D^t_G(\Theta_\pi)$$
where $\chi\otimes\pi$ denotes the twist of $\pi$ by $\chi$.}
\item[2-]$D^t_G\circ E_G= E_G\circ D^t_G.$ 
\item[3-]\textsl{Let $\Theta\in\mathcal{V}(G)$, then $D^t_G(\Theta)$ is a finite linear combination of induced of supertempered characters up to sign.}
\end{enumerate}
\end{Lemma}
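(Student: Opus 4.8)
The plan is to reduce each of the three statements to a compatibility of the two elementary operations $i_{G,M}$ and $r^{t}_{M,G}$ with the operation under consideration, and then to propagate that compatibility through the defining formula $D^{t}_{G}=\sum_{M\in\mathcal{L}}(-1)^{\dim(A_{M})}\,i_{G,M}\circ r^{t}_{M,G}$, whose coefficients are real and independent of the representation involved. \emph{For statement (1)}, note first that for $\Theta_{\chi\otimes\pi}$ to lie in $\mathcal{V}(G)$ the character $\chi$ must be unitary, hence so is its restriction $\chi|_{M}$ to any Levi subgroup $M$. I would then use two standard facts: the projection formula $i_{G,M}(\sigma)\otimes\chi\cong i_{G,M}(\sigma\otimes\chi|_{M})$ for normalized parabolic induction, and the isomorphism $(\pi\otimes\chi)_{\overline{N}}\cong\pi_{\overline{N}}\otimes\chi|_{M}$ of normalized Jacquet modules. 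The latter shows $\mathrm{Exp}((\pi\otimes\chi)_{\overline{N}})=\chi|_{A_{M}}\cdot\mathrm{Exp}(\pi_{\overline{N}})$, and since $\chi|_{A_{M}}$ is unitary it carries $\mathrm{Exp}(\pi_{\overline{N}})\cap\widehat{A}$ onto $\mathrm{Exp}((\pi\otimes\chi)_{\overline{N}})\cap\widehat{A}$; hence $(\pi\otimes\chi)^{w}_{\overline{N}}\cong\pi^{w}_{\overline{N}}\otimes\chi|_{M}$ and $r^{t}_{M,G}(\Theta_{\chi\otimes\pi})=\chi|_{M}\otimes r^{t}_{M,G}(\Theta_{\pi})$. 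Substituting into the formula for $D^{t}_{G}$ and applying the projection formula once more on the outside yields $D^{t}_{G}(\Theta_{\chi\otimes\pi})=\chi\otimes D^{t}_{G}(\Theta_{\pi})$. This part is routine.

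\emph{For statement (2)}, it suffices to show that $E$ commutes separately with $i_{G,M}$ and with $r^{t}_{M,G}$; granting this, $D^{t}_{G}\circ E_{G}=E_{G}\circ D^{t}_{G}$ follows at once from the defining formula, term by term. The identity $E_{G}\circ i_{G,M}=i_{G,M}\circ E_{M}$ holds because the contragredient of a normalized parabolically induced representation is the induction of the contragredient. For the other identity, $E_{M}\circ r^{t}_{M,G}=r^{t}_{M,G}\circ E_{G}$, I would compare the characters of $(\pi^{w}_{\overline{N}})^{\vee}$ and $(\pi^{\vee})^{w}_{\overline{N}}$: the ingredients are Casselman's duality theorem (the contragredient of the Jacquet module of $\pi$ along $\overline{P}$ is the Jacquet module of $\pi^{\vee}$ along $P$) together with the fact that passing to the maximal tempered quotient is an intrinsic operation, compatible with contragredients, since a tempered representation has tempered contragredient and $\widehat{A}$ is stable under $\chi\mapsto\chi^{-1}$. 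Equivalently, tempered representations being unitarizable, $E_{G}$ is simply complex conjugation of characters on $\mathcal{V}(G)$, and one checks that $i_{G,M}$ and $r^{t}_{M,G}$ commute with complex conjugation. The delicate point here, and the one I expect to cost the most work, is the interplay between Jacquet restriction along $P$ and along $\overline{P}$, which is where one must invoke the known good behaviour of the weak constant term of a tempered representation.

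\emph{For statement (3)}, I would start from the structural fact (see [13], [14]) that $\mathcal{V}(G)$ is spanned by inductions of supertempered characters: every $\Theta\in\mathcal{V}(G)$ can be written as a finite sum $\Theta=\sum_{L\in\mathcal{L}}i_{G,L}(\Theta_{L})$ with $\Theta_{L}\in\mathcal{V}_{st}(L)$. Applying $D^{t}_{G}$ and using the identity $D^{t}_{G}\circ i_{G,L}=i_{G,L}\circ D^{t}_{L}$ of Theorem 2.2 gives $D^{t}_{G}(\Theta)=\sum_{L}i_{G,L}(D^{t}_{L}(\Theta_{L}))$. It then remains to observe that $D^{t}_{L}$ acts on $\mathcal{V}_{st}(L)$ by the scalar $(-1)^{\dim A_{L}}$: if $\Theta_{L}\in\mathcal{V}_{st}(L)$ and $M$ is a Levi subgroup of $L$ with $M\neq L$, then $r^{t}_{M,L}(\Theta_{L})=(\Theta_{L})^{w}_{M}=0$ by the definition \eqref{super} of a supertempered character, so in $D^{t}_{L}(\Theta_{L})=\sum_{M}(-1)^{\dim A_{M}}i_{L,M}(r^{t}_{M,L}(\Theta_{L}))$ only the term $M=L$ survives and $D^{t}_{L}(\Theta_{L})=(-1)^{\dim A_{L}}\Theta_{L}$. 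Hence $D^{t}_{G}(\Theta)=\sum_{L}(-1)^{\dim A_{L}}i_{G,L}(\Theta_{L})$, a finite linear combination of inductions of supertempered characters up to sign, as asserted. The main obstacle in this part is the structural decomposition quoted at the outset; if one prefers not to cite it, it can be proved by induction on the semisimple $F$-rank of $G$ through a Möbius-inversion argument over the set of Levi subgroups, using the transitivity of the weak constant term, which is routine but somewhat lengthy bookkeeping.
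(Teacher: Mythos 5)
Your proposal is correct and follows essentially the same route as the paper: part (1) via the compatibility of twisting with $i_{G,M}$ and the (weak) Jacquet restriction applied term by term to the defining sum for $D^{t}_{G}$, part (2) via the commutation of the contragredient with induction and with the weak constant term (Casselman's duality, plus independence of the choice of parabolic), and part (3) via the decomposition of $\mathcal{V}(G)$ into inductions of supertempered characters, the identity $D^{t}_{G}\circ i_{G,L}=i_{G,L}\circ D^{t}_{L}$, and the vanishing of proper weak restrictions of supertempered characters. Your version is in fact slightly more explicit than the paper's (you record the precise sign $(-1)^{\dim A_{L}}$ and the unitarity argument for the tempered exponents), but the underlying argument is the same.
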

\begin{proof}\textsl{Let $P= MN$ a standard parabolic subgroup of G and $\sigma$ is a tempered
representation of $M$, then proposition $1.9$ of $[4]$ implies:
$$i_{G, M}(\Theta_{\chi\otimes\sigma})= \chi\otimes i_{G, M}(\Theta_\sigma),~\textsl{and}~ 
 r_{M,G}(\Theta_{\chi\otimes\pi})= \chi\otimes r_{G, M}(\Theta_\pi)$$
 Consequently, $$r^t_{M,G}(\Theta_{\chi\otimes\pi})= \chi\otimes r^t_{G, M}(\Theta_\pi).$$
 The  first assertion now follows from the definition of $D^t_G.$\\
 For the second assertion, given $\pi\in\Pi(G)$ then
according to proposition $3.1.2$ and corollary $4.2.5$ of $[9]$, under the assumption that $\Theta^{w}_M := \Theta^{w}_{\overline{P}}$, is independent of all choices of parabolic subgroups with Levi component $M$ [14], we get:}
\begin{eqnarray*}
E_G\circ D^t_G(\Theta_\pi)&=& E_G\circ \sum_{M \in \mathcal {L}}(-1)^{\dim(A_{M})}\hspace{0,2cm} i_{G,M}\circ r^{t}_{M,G}(\Theta_\pi)\\
&=& \sum_{M \in \mathcal {L}}(-1)^{\dim(A_{M})}\hspace{0.2cm} E_G\circ i_{G,M}\circ r^{t}_{M,G}(\Theta_\pi)\\
&=&\sum_{M \in \mathcal {L}}(-1)^{\dim(A_{M})}\hspace{0.2cm} i_{G,M}\circ E_M\circ r^{t}_{M,G}(\Theta_\pi)\\
&=&\sum_{M \in \mathcal {L}}(-1)^{\dim(A_{M})}\hspace{0.2cm}  i_{G,M}\circ r^{t}_{M,G} \circ E_G(\Theta_\pi)\\
&=& D^t_G\circ E_G(\Theta_\pi).
\end{eqnarray*}
In particular: $$D^t_G\circ E_G(\Theta_{\chi\otimes\pi})= \chi^{-1}\otimes D^t_G(E_G(\Theta_\pi)).$$
 
Let $\Theta\in\mathcal{V}(G)$, then there is an unique finite familly $\lbrace L_i, \Theta_i\rbrace_{1\leq i\leq k}$, where  $L_i\in \mathcal{L}$ and $\Theta_i\in\mathcal{V}_{st}(G)$, such that [5, corollaire 7]: $$\Theta= \sum_{i} i_{G, L_{i}}(\Theta_i)$$ 
so from \eqref{super} and that $D^t_G$ commutes with the induction funtor, we find: $$D^t_G(\Theta)= \sum_{i} i_{G, L_{i}}\circ D^t_{L_i}(\Theta_i)=  \pm\sum_{i} i_{G, L_{i}}(\Theta_i)$$ 
\end{proof}
In the next section, we will define by analogy to [6], a duality on  the space of all characters of irreducible representations in $\Pi(\widetilde{R_\sigma}, \chi_\sigma)$.
\vskip 2em
\maketitle

\large{\section{Duality in $\bf {\mathbb{C} (\Pi (\widetilde{R_\sigma}, \chi_ \sigma))} $}}
We say that $ (M, \sigma) $ is a discrete pair of $ G $ if $ M \in \mathcal {L} $ and $ \sigma \in \Pi_ {2} (M ) $. Let
$ (M, \sigma) $ be a fixed discrete pair of $G$,  $ i_ {G, M} (\sigma) $ the tempered (normalized) representation parabolically induced from $\sigma$ and $ \Pi _ {\sigma} (G) $ the set of all its irreducible constituents.  We write: $$ W_ \sigma = \lbrace w \in W: w \sigma \simeq \sigma \rbrace$$ for the isotropy group  of $\sigma$, and: $$\Delta_\sigma= \lbrace\alpha \in \Phi^{+}(P, A_M): \mu_\alpha(\sigma) = 0\rbrace$$ where $\mu_\alpha(\sigma)$) is the rank one Plancherel measure for $\sigma$ attached
to $\alpha$ [11; p 1108]. To each $w \in W_\sigma, $ there exist an intertwining operator  of the representation $ i_{G, M} (\sigma)$ in itself. Let $\mathcal{C}(\sigma)$ be the commuting
algebra of $i_{G, M} (\sigma)$, and $W^0_\sigma$ the subgroup of $ W_\sigma $, generated by the reflections in the roots of $\Delta_\sigma$, then the $R$-group of $i_{G, M}(\sigma)$, [11],[15]:  $$R _{\sigma}=\lbrace r.\in W_\sigma;~ r\Delta_\sigma= \Delta_\sigma\rbrace\simeq W_\sigma / W^0_\sigma $$  has the property that $\mathcal{C}(\sigma)$ is isomorphic to the complex group algebra $ \mathbb {C} [R_ \sigma] $ twisted by a cocycle $\eta$. As in[1, \S2], let: $$
1\to Z_{\sigma}\to \widetilde{R_\sigma} \to R_{\sigma}\to 1
$$
the central extension of $ R_{\sigma} $, over which the cocycle associated to the intertwining operator split. 
There is a character $ \chi_{\sigma} $ of $ Z_{\sigma} $ such that $ \Pi_{\sigma} (G) $
is parametrize by  $ \Pi (\tilde { R _\sigma}, \chi _ {\sigma})$, the set of irreducible representations $ \rho $ of
$ \widetilde{R_\sigma}$ having $ \chi_{\sigma} $ as $ Z_{\sigma}-$central character.
Let $ \Bbb C (\Pi _ {\sigma} (G))$ (resp $ \Bbb C (\Pi (\widetilde{R_\sigma}, \chi _ {\sigma})) $) the complex vector space generated by
the characters of representations in $ \Pi _ {\sigma} (G) $ (resp. $ \Pi (\widetilde{R_\sigma}, \chi _ {\sigma}) $). Hence the bijection $ \rho \mapsto \pi_ {\rho} $ between $ \Pi (\widetilde{R_\sigma}, \chi _ {\sigma}) $ and $ \Pi _ {\sigma}(G) $ induces an isomorphism:
$$
I ^ {G}: \Bbb C (\Pi (\widetilde{R_\sigma}, \chi _ {\sigma})) \to \Bbb C (\Pi _ {\sigma} (G))
,~ \theta_\rho \mapsto I ^ {G} (\theta_\rho) = \Theta_{\pi_\rho}$$
described in term of intertwining algebra as in [1, p 88].\\

Let $ L \in \mathcal {L} (M) $, we say that $ L $  satisfies the
Compatibility condition of Arthur if $ \mathfrak {a} _ {L} \cap \overline {\mathfrak {a} _ {\sigma}^ {+}} $ contains an open subset of $ \mathfrak {a} _{L}, $ where $\mathfrak {a} _{\sigma}^ {+}:=\lbrace X\in \mathfrak{a}_M;~ \alpha(X)>> 0, \forall\alpha\in\Delta_\sigma\rbrace$, denotes the positive chamber
corresponding to $ \Delta_\sigma$. We denote  by $ \mathcal {L} _{A} (M) $ the set 
 of Levi subgroups $L \in \mathcal {L} (M) $ which satisfies this condition, so if $L\in\mathcal {L} _{A} (M)$, then $R_{\sigma}^{L}:=  R_{\sigma}\cap W^{L}$ is the R-group of $i_{L,M}(\sigma).$ Therefore as at $ G $, we obtain a bijection $ \rho_ {L} \mapsto \tau _ {\rho_ {L}} $ between 
$ \Pi (\widetilde{R^{L}_\sigma}, \chi _ {\sigma}) $ and $ \Pi _ {\sigma} (L) $ which induces again the isomorphism:
 $$ I ^ {L}: \Bbb C (\Pi (\widetilde{R^{L}_\sigma}, \chi _ {\sigma})) \to \Bbb C (\Pi _ {\sigma} (L) ). $$
Notice that the Jacquet-weak restriction $ r ^ {t} _ {L, G} $ does not send $ \Bbb C (\Pi _ {\sigma} (G)) $ into $ \Bbb C (\Pi _ {\sigma} (L)),$ [14, Lemma 3.5], for that we denote by $ r ^ {t} _ {L, G} (\sigma) $ the projection of $ r ^ {t} _{L, G} $ on $ \Bbb C (\Pi_ { \sigma} (L)). $\\
Let $L\in\mathcal {L} _{A} (M)$, we consider the following two functors of induction and restriction :
 $$\textsl{Ind}^{\widetilde{R_\sigma}}_ {\widetilde{R^{L}_\sigma}}: \mathbb{C}(\Pi(\widetilde{R^{L}_\sigma}, \chi_\sigma))\to \mathbb{C}(\Pi(\widetilde{R_\sigma}, \chi_\sigma))$$
and $$\textsl{Res}^{\widetilde{R_\sigma}}_{\widetilde{R^{L}_\sigma}}: \mathbb{C}(\Pi(\widetilde{R_\sigma}, \chi_\sigma))\to \mathbb{C}(\Pi(\widetilde{R^{L}_\sigma}, \chi_\sigma))$$\\
then using $(2.2)$ and the transitivity of induction and restriction functors we get the following commutative diagram [6, Th\'eor\`eme 6]:
\[ \xymatrix{
\Bbb C(\Pi(\widetilde{R^L_\sigma},\chi_{\sigma})) \ar[d]^{I^L} \ar[r]_{\textsl{Ind}^{\widetilde{R_\sigma}}_ {\widetilde{R^{L}_\sigma}}}& \Bbb C(\Pi(\widetilde{R_\sigma},\chi_{\sigma}))\ar[d]^{I^G} \ar[r]_{\textsl{Res}^{\widetilde{R_\sigma}}_{\widetilde{R^{L}_\sigma}}} &\Bbb C(\Pi(\widetilde{R^L_\sigma},\chi_{\sigma})) \ar[d]^{I^L}\\  \Bbb C(\Pi_{\sigma}(L))\ar[r]_{i_{G, L}} & \Bbb C(\Pi_{\sigma}(G))\ar[r]_{r^t_{L, G}(\sigma)} & \Bbb C(\Pi_{\sigma}(L))}
\]

Let $r\in R_\sigma$, and $ \tilde {r}$ its inverse image in $\widetilde{R_\sigma}$, we say that the triplet $(M, \sigma, \tilde{r})$ is a $\widetilde{R_\sigma}-$virtual triplet of $ G$. To each triplet $(M, \sigma, \tilde{r})$,  J. Arthur [1, \S 3] corresponds a distribution-character $\Theta(M, \sigma, \tilde {r}):=  \Theta ^ G (M, \sigma, \tilde {r})$, called the virtual character of $G$ and is written in the following form:\\
\begin{equation}
 \Theta(M, \sigma, \tilde{r}) =\sum_{\rho\in\Pi (\widetilde{R_\sigma}, \chi_\sigma)} \theta_{\rho^\vee}(\tilde{r})~ \Theta_{\pi_\rho}.
\end{equation} 
By inverting $(3.2)$ we will have:
\begin{equation}
\Theta_{\pi_\rho}= {\vert \widetilde{R_\sigma}\vert}^{-1} \sum_{\tilde{r}\in\widetilde{R_\sigma}} \theta_{\rho}(\tilde{r})~\Theta(M, \sigma, \tilde{r}).
\end{equation} 
Those triplets are $W_0-$invariant, and if $z\in Z_\sigma$, then:\\ $$\Theta(z(M, \sigma, \tilde{r}) )=\chi^{-1}_\sigma(z)~\Theta (M, \sigma, \tilde{r})$$\\
So $\Theta(M, \sigma, \tilde{r})$ can be vanished. Following Arthur [1, p 92], the $ \widetilde{R_\sigma}-$triplet $ (M, \sigma, \tilde {r}) $ of $ G $ is said to be essential if, $ \chi_ \sigma\equiv 1$ on $ \lbrace z \in Z_\sigma, z.cl(\tilde {r}) \subset cl(\tilde {r}) \rbrace $, where $ cl (\tilde {r}) $ is the  $ \widetilde{R_\sigma}-$conjugacy
class of $ \tilde {r}. $ Therefore, $ \Theta(M, \sigma, \tilde {r}) $
is non-zero if and only if  the $\widetilde{R_\sigma}-$virtual triplet is essential. The collection of distributions relative to this type of triplets forms a basis of $\mathcal{V}(G)$ [1, \S 3].\\

For each $r\in R_{\sigma}$, we define: $$\mathfrak{a}_{M}^{r}= \lbrace H \in \mathfrak{a}_{M}:\hspace{0,2cm}r.H=H\hspace{0,2cm}\rbrace$$
the space of fixed vectors of $r$.
We know from [7], that there exists a unique $ L_ {r} \in \mathcal{L}(M) $ such that $\mathfrak {a}_{M}^{r} = \mathfrak {a} _ {L_ {r}} $ and that it verifies the compatibility condition of Arthur. We denote by : $$R_{\sigma, \textsl{reg}}=\lbrace r\in R_\sigma;\hspace{0.5cm}\mathfrak{a}_{M}^{r}= \mathfrak{a}_G\rbrace$$
the set of regulars elements in $R_\sigma$, and: $$\mathcal{L}(R_\sigma)= \lbrace S\in\mathcal{L}(M);~ L_r= S,~  \textsl{for some}~ r\in R_\sigma\rbrace$$ so it is an immediate consequence that $$R_\sigma= \bigcup_{S\in\mathcal{L}(R_\sigma)} R^S_{\sigma, \textsl{reg}} $$
the union is disjoint. If $R _ {\sigma, \textsl{reg}}$ is not empty, we say that $R_\sigma$ and the triplet $ (M, \sigma, \tilde {r}),$  $\tilde {r} \in \widetilde{R_{\sigma, \textsl{reg}}}$ the inverse image of $R_{\sigma,\textsl{reg}}$ in $\widetilde{R_\sigma}$ are both elliptic, and so $\Theta(M, \sigma, \tilde{r})$ is a supertempered virtual character of $G$ [14].\\
\begin{Lemma}
\textsl{Let $\rho\in\Pi(\widetilde{R_\sigma}, \chi_\sigma)$, then for every $ L\notin\mathcal {L}_A(M)$:} $(\Theta_{\pi_\rho})^w_L = 0. $
\end{Lemma}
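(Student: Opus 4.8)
The plan is to reduce the vanishing statement to a statement about exponents of the Jacquet module and then invoke the structure of $\Pi_\sigma(G)$ via parabolic induction from $\sigma$. First I would use formula $(2.2)$ together with the fact that $\pi_\rho$ is a constituent of $i_{G,M}(\sigma)$: applying the weak restriction $r^t_{L,G}$ to $\Theta_{\pi_\rho}$, it suffices to understand $(\Theta_{\pi_\rho})^w_L$ as a subquotient of $r^t_{L,G}\circ i_{G,M}(\Theta_\sigma)$, which by Bernstein--Zelevinsky decomposes as $\sum_{w\in W^{M,L}} i_{L,L_w}\bigl(r^t_{L_w,wM}(w\Theta_\sigma)\bigr)$. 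The key point is that, since $\sigma\in\Pi_2(wM)$ (after applying $w$), the weak constant term $r^t_{L_w,wM}(w\Theta_\sigma)$ is nonzero only when $L_w = wM$, i.e. when $w$ already sends $M$ into $L$; this is the discrete-series analogue of the statement that the weak constant term of a discrete series vanishes along every proper parabolic.

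Next I would translate the surviving condition "$wM\subset L$ for some $w\in W^{M,L}$" into the compatibility condition of Arthur. The point is that if $L\notin\mathcal L_A(M)$, then $\mathfrak a_L\cap\overline{\mathfrak a_\sigma^+}$ does not contain an open subset of $\mathfrak a_L$; I would argue that this forces every $W_\sigma$-translate of $\mathfrak a_\sigma^+$ to miss the interior of $\mathfrak a_L$ as well, using that the chambers $w\mathfrak a_\sigma^+$ for $w\in W_\sigma$ tile a neighbourhood of $\mathfrak a_{L_r}$-type subspaces and that $\mathcal L(R_\sigma)$ consists precisely of the Levi subgroups arising as $L_r$. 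Consequently no constituent of the Jacquet module of $\pi_\rho$ along (the opposite of) a parabolic with Levi $L$ can be tempered, so the tempered quotient $\pi^w_{\overline N}$ is zero, giving $(\Theta_{\pi_\rho})^w_L = 0$.

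Alternatively — and this is probably the cleaner route — I would argue directly from the exponents: by the theory of the Knapp--Stein $R$-group, the exponents of $i_{G,M}(\sigma)$ along $\overline P$ lie in the $W_\sigma$-orbit of the exponent of $\sigma$ twisted by $\delta$, and an exponent $\chi$ of $(\pi_\rho)_{\overline N_L}$ is unitary (i.e. lies in $\widehat A_L$) precisely when the corresponding Weyl chamber condition places the relevant functional in $\overline{\mathfrak a_\sigma^+}$ intersected with $\mathfrak a_L$. Since the weak constant term $(\Theta_{\pi_\rho})^w_L$ picks out exactly the summand $\bigoplus_{\chi\in\mathrm{Exp}((\pi_\rho)_{\overline N_L})\cap\widehat A_L} V_{\overline N_L,\chi}$, the failure of the compatibility condition means this intersection is empty, hence the sum is zero. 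I would make this precise by citing [14, Lemma 3.5] (already invoked in the text for the related fact that $r^t_{L,G}$ does not preserve $\mathbb C(\Pi_\sigma(G))$) together with the description of $L_r$ from [7].

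The main obstacle I anticipate is the bookkeeping in the second paragraph: carefully showing that "$L\notin\mathcal L_A(M)$" is equivalent to "no $W_\sigma$-translate of the closed positive chamber $\overline{\mathfrak a_\sigma^+}$ meets $\mathfrak a_L$ in an open subset of $\mathfrak a_L$", and then connecting that to the nonvanishing of $r^t_{L_w,wM}(w\Theta_\sigma)$ in the Bernstein--Zelevinsky filtration. The rest is a routine assembly of the weak-constant-term definitions, Theorem 2.1, and the $R$-group parametrisation.
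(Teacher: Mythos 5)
Your proposal has a genuine gap, and it sits exactly where you anticipate only ``bookkeeping''. Both of your routes work at the level of the full induced character $i_{G,M}(\Theta_\sigma)$: in the first you apply Theorem 2.1 to $r^{t}_{L,G}\circ i_{G,M}(\Theta_\sigma)$ and use that the discrete series $\sigma$ is supertempered, so that the surviving terms are those with $L_w=wM$, i.e. $wM\subset L$; in the second you look at the unitary exponents of the Jacquet module of $i_{G,M}(\sigma)$ along $\overline{P}_L$. But the condition ``$wM\subset L$ for some $w\in W^{M,L}$'' holds for \emph{every} $L\in\mathcal{L}(M)$, not only for $L\in\mathcal{L}_A(M)$: for any $L\supset M$ the term $w=e$ already contributes $i_{L,M}(\Theta_\sigma)\neq 0$, and likewise the restriction to $A_L$ of the unitary central character of $\sigma$ is always a unitary exponent of the Jacquet module along $L$. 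Hence the weak constant term of the full induced representation along an $L\in\mathcal{L}(M)\setminus\mathcal{L}_A(M)$ does not vanish, so no subquotient/positivity argument starting from $i_{G,M}(\Theta_\sigma)$, and no count of unitary exponents of the induced representation, can produce the vanishing for the individual constituent $\pi_\rho$. The step ``translate the surviving condition into the compatibility condition of Arthur'' is not bookkeeping; at the level at which you work it is simply not available.

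What the paper does, and what is absent from your plan, is to expand the single character $\Theta_{\pi_\rho}$ by Arthur's inversion formula $(3.3)$, $\Theta_{\pi_\rho}=\vert\widetilde{R_\sigma}\vert^{-1}\sum_{\tilde r}\theta_\rho(\tilde r)\,\Theta(M,\sigma,\tilde r)$, and to group the elements of $R_\sigma$ according to the Levi $L_r$ with $\mathfrak{a}_M^{r}=\mathfrak{a}_{L_r}$, via $R_\sigma=\bigcup_{S\in\mathcal{L}(R_\sigma)}R^{S}_{\sigma,\mathrm{reg}}$. For $\tilde r\in\widetilde{R^{S}_{\sigma,\mathrm{reg}}}$ the triplet $(M,\sigma,\tilde r)$ is elliptic over $S$, so $\Theta^{G}(M,\sigma,\tilde r)=i_{G,S}(\Theta^{S}(M,\sigma,\tilde r))$ with $\Theta^{S}(M,\sigma,\tilde r)$ supertempered on $S$; only then are Theorem 2.1 and supertemperedness applied, on $S$ rather than on $M$, so that a nonzero contribution forces $wS\subset L$ for some $w\in W^{S,L}$, i.e. $\mathfrak{a}_{w^{-1}L}\subset\mathfrak{a}_{L_r}$ for some $r\in R_\sigma$, and since each $L_r$ satisfies Arthur's compatibility condition (by [7]) this is what ties the non-vanishing to $\mathcal{L}_A(M)$. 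Your appeal to the tiling of $\mathfrak{a}_M$ by $W_\sigma$-translates of $\overline{\mathfrak{a}_\sigma^{+}}$, or to [14, Lemma 3.5], does not supply this: that lemma concerns $r^{t}_{L,G}$ leaving $\mathbb{C}(\Pi_\sigma(G))$, not a vanishing statement, and the chamber geometry alone does not decide which unitary exponents of $i_{G,M}(\sigma)$ actually occur in the Jacquet module of the particular constituent $\pi_\rho$. The missing idea is precisely the decomposition of $\Theta_{\pi_\rho}$ into characters induced from supertempered elliptic-triplet virtual characters on the Levis $L_r$, $r\in R_\sigma$; without it neither of your routes closes.
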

\begin{proof} 
Let $\rho\in\Pi(\widetilde{R_\sigma}, \chi_\sigma)$, and $L \in\mathcal {L}$, then under the asumption that $\pi_\rho$ is tempered, we have:
\begin{align*}
(\Theta_{\pi_\rho})^w_L &= \vert\widetilde{R_\sigma}\vert^{-1}\sum_{\tilde{r}\in\widetilde{R_\sigma}} \theta_\rho (\tilde{r})~ \Theta^G(M, \sigma, \tilde{r})^w_L\\
&= \sum_{S\in\mathcal{L}(R_\sigma)} \vert \widetilde{R^S_{\sigma, \textsl{reg}}}\vert^{-1}\sum_{\tilde{r}\widetilde{R^S_{\sigma, \textsl{reg}}}}\theta_\rho(\tilde{r})~\Theta^G(M, \sigma, \tilde{r})^w_L.
 \end{align*}
For $S\in\mathcal{L}(R_\sigma)$, if $\tilde{r}\in \widetilde{R^S_{\sigma, \textsl{reg}}}$, then $(M, \sigma, \tilde{r})$ is an elliptic virtual triplet of $S$ therefore: 
\begin{align*}
\Theta^G(M, \sigma, \tilde{r})&= \sum_{\tau\in\Pi(S)} \theta_{\rho^\vee_\tau}(\tilde{r})~i_{G, S}(\Theta_\tau)\\
&= i_{G, S}(\Theta^S(M, \sigma, \tilde{r}))
\end{align*}
hence:
\begin{align*}
(\Theta_{\pi_\rho})^w_L &= \sum_{S\in\mathcal{L}(R_\sigma)} \vert \widetilde{R^S_{\sigma, \textsl{reg}}}\vert^{-1}\sum_{\tilde{r}\widetilde{R^S_{\sigma, \textsl{reg}}}}\theta_\rho(\tilde{r})~i_{G, S}(\Theta^S(M, \sigma, \tilde{r}))^w_L\\
&= \sum_{S\in\mathcal{L}(R_\sigma)} \vert \widetilde{R^S_{\sigma, \textsl{reg}}}\vert^{-1}\sum_{\tilde{r}\widetilde{R^S_{\sigma, \textsl{reg}}}}\theta_\rho(\tilde{r})~~ r^t_{L, G}\circ i_{G, S}(\Theta^S(M, \sigma, \tilde{r})).
\end{align*}
By using $(2.2)$ we get: 
$$(\Theta_{\pi_\rho})^w_L= \sum_{S\in\mathcal{L}(R_\sigma)} \vert \widetilde{R^S_{\sigma, \textsl{reg}}}\vert^{-1}\sum_{\tilde{r}\widetilde{R^S_{\sigma, \textsl{reg}}}}\theta_\rho(\tilde{r})\sum_{w\in W^{S, L}} i_{L, L_w}(r^t_{L_w, wS}(w\Theta^S(M, \sigma, \tilde{r})))$$  
where, $L_w= wS\cap L\in\mathcal{L}(L)$. Since $\Theta^S(M, \sigma, \tilde{r})\in\mathcal{V}_{st}(S)$, we have $w\Theta^S(M, \sigma, \tilde{r})\in\mathcal{V}_{st}(wS)$, hence $r^t_{L_w, wS}(w\Theta^S(M, \sigma, \tilde{r}))= 0$, unless there is $S\in \mathcal{L}(R_\sigma)$, and $w\in W^{S, L}$ such that, $L_w= wS$.\\
So, $r^t_{L_w, wS}(w\Theta^S(M, \sigma, \tilde{r}))= 0$, unless there is $w\in W^{S, L}$ such that, $S\subset w^{-1}L$, if this is the case, we get: $$w^{-1}\mathfrak{a}_L= \mathfrak{a}_{w^{-1}L}\subset \mathfrak{a}_S= \mathfrak{a}^r_M= \mathfrak{a}_{L_r}\subset \mathfrak{a}_M$$
for some $r\in R_\sigma$. Therefore without loss of generality, and since $L_r\in\mathcal{L}_A(M)$, we assume that $L\in\mathcal{L}_A(M)$, [1, p 90].\\
 \end{proof}
 
We define formally the sign application by: 
$$ \xi_{\widetilde{R_\sigma}}:\hspace{0,2cm} \widetilde{R_\sigma} \to \{ \pm 1 \}: \xi_{\widetilde{R_\sigma}}(\tilde{r}) = (-1)^{\dim(A_{L_{r}})}.$$
By analogy, to $ D ^{t}_{G}$ we define an operator $ D _ {\widetilde {R_\sigma}, \chi_{\sigma}} $ on $ \Bbb C ( \Pi (\widetilde {R_\sigma}, \chi _ {\sigma})) $ by:
\begin{equation}
D _ {\widetilde {R_\sigma}, \chi_{\sigma}}=\sum_{L\in \mathcal {L}_{A}(M)} (-1)^{\dim (A_{L})}\hspace{0,2cm} \textsl{Ind}^{\widetilde{R_\sigma}}_{\widetilde{R^{L}_\sigma}}\circ
\textsl{Res}^{\widetilde{R_\sigma}}_{\widetilde{R^{L}_\sigma}}
\end{equation}
If the cocycle splits, then the operator is simply $D_{R_\sigma}$.
The next theorem presents some properties of this operator.
\begin{Th}\textsl{The operator $D _ {\widetilde {R_\sigma}, \chi_{\sigma}}$ has the following properties:}
\begin{enumerate}
\item[1-]\textsl{Let $L \in \mathcal {L}_A(M)$, then the folowing diagram is commutative:}
\[ \xymatrix{
\Bbb C(\Pi(\widetilde{R_\sigma},\chi_{\sigma})) \ar[r]^{\textsl{Res}^{\widetilde{R_\sigma}}_{\widetilde{R^{L}_\sigma}}}\ar[d]_{D _ {\widetilde {R_\sigma}, \chi_{\sigma}}} & \Bbb C(\Pi(\widetilde{R^L_\sigma},\chi_{\sigma}))\ar[r]^{\textsl{Ind}^{\widetilde{R_\sigma}}_ {\widetilde{R^{L}_\sigma}}}  \ar[d]_{D _ {\widetilde {R^L_\sigma}, \chi_{\sigma}}} &\Bbb C(\Pi(\widetilde{R_\sigma},\chi_{\sigma}))\ar[r]^{I^G} \ar[d]_{D _ {\widetilde {R_\sigma}, \chi_{\sigma}}} & \Bbb C(\Pi_{\sigma}(G))\ar[d]_{D^t_G}\\
 \Bbb C(\Pi(\widetilde{R_\sigma},\chi_{\sigma})) \ar[r]^{\textsl{Res}^{\widetilde{R_\sigma}}_{\widetilde{R^{L}_\sigma}}} & \Bbb C(\Pi(\widetilde{R^L_\sigma},\chi_{\sigma}))\ar[r]^{{\textsl{Ind}^{\widetilde{R_\sigma}}_ {\widetilde{R^{L}_\sigma}}}} &\Bbb C(\Pi(\widetilde{R_\sigma},\chi_{\sigma}))\ar[r]^{I^G} & \Bbb C(\Pi_{\sigma}(G))} 
\] 
\item[2-]$D _ {\widetilde {R_\sigma}, \chi_{\sigma}}$ \textsl{is an involution.}
 \end{enumerate}
\end{Th}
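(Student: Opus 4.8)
The plan is to prove the rightmost square of part~(1), namely the identity $I^G\circ D_{\widetilde{R_\sigma},\chi_{\sigma}}=D^t_G\circ I^G$, first, and to deduce the other two squares and the involutivity in~(2) from it by formal diagram chases. Since the argument for the rightmost square below uses neither of the other squares, one is free to run it with $G$ replaced by any $H\in\mathcal L_A(M)$; granting it for all such $H$, the middle square of~(1) follows by composing with the bijection $I^G$ and using the left square of the diagram [6, Th\'eor\`eme~6], the rightmost square at $L$ and at $G$, and Theorem~2.2(1): $I^G\circ\textsl{Ind}^{\widetilde{R_\sigma}}_{\widetilde{R^L_\sigma}}\circ D_{\widetilde{R^L_\sigma},\chi_\sigma}=i_{G,L}\circ I^L\circ D_{\widetilde{R^L_\sigma},\chi_\sigma}=i_{G,L}\circ D^t_L\circ I^L=D^t_G\circ i_{G,L}\circ I^L=D^t_G\circ I^G\circ\textsl{Ind}^{\widetilde{R_\sigma}}_{\widetilde{R^L_\sigma}}=I^G\circ D_{\widetilde{R_\sigma},\chi_\sigma}\circ\textsl{Ind}^{\widetilde{R_\sigma}}_{\widetilde{R^L_\sigma}}$, and one cancels $I^G$. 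The leftmost square is obtained in the same way after composing with $I^L$, using the right square of [6, Th\'eor\`eme~6] and the relation $r^t_{L,G}(\sigma)\circ D^t_G=D^t_L\circ r^t_{L,G}(\sigma)$ on $\Bbb C(\Pi_\sigma(G))$, which in turn follows from Theorem~2.2(1) once one knows that $D^t_L$ commutes with the projection onto $\Bbb C(\Pi_\sigma(L))$; the latter holds because, by Theorem~2.2(5) applied to every discrete pair of $L$, $D^t_L$ stabilises each $\Bbb C(\Pi_{\sigma'}(L))$. Finally part~(2) is then immediate: applying the rightmost square twice, $I^G\circ D_{\widetilde{R_\sigma},\chi_\sigma}^{2}=D^t_G\circ I^G\circ D_{\widetilde{R_\sigma},\chi_\sigma}=(D^t_G)^{2}\circ I^G=I^G$ by Theorem~2.2(2), hence $D_{\widetilde{R_\sigma},\chi_\sigma}^{2}=\mathrm{id}$.

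\textbf{The rightmost square.} Inserting the definition of $D_{\widetilde{R_\sigma},\chi_\sigma}$ and using the two squares of [6, Th\'eor\`eme~6] yields $I^G\circ D_{\widetilde{R_\sigma},\chi_\sigma}=\bigl(\textstyle\sum_{L\in\mathcal L_A(M)}(-1)^{\dim A_L}\,i_{G,L}\circ r^t_{L,G}(\sigma)\bigr)\circ I^G$, so, $I^G$ being surjective, the square is equivalent to the operator identity $D^t_G=\sum_{L\in\mathcal L_A(M)}(-1)^{\dim A_L}\,i_{G,L}\circ r^t_{L,G}(\sigma)$ on $\Bbb C(\Pi_\sigma(G))$ (note that by Lemma~3.1 the terms of $D^t_G$ indexed by $L\notin\mathcal L_A(M)$ already vanish there). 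I would verify this on the spanning family $\{\Theta^G(M,\sigma,\tilde r):\tilde r\in\widetilde{R_\sigma}\}$ of $\Bbb C(\Pi_\sigma(G))$ coming from~(3.2). On the one hand, writing $S=L_r$, one has $\Theta^G(M,\sigma,\tilde r)=i_{G,S}(\Theta^S(M,\sigma,\tilde r))$ with $\Theta^S(M,\sigma,\tilde r)\in\mathcal V_{st}(S)$ (as in the proof of Lemma~3.1), and $D^t_S$ acts on $\mathcal V_{st}(S)$ by the scalar $(-1)^{\dim A_S}$ — immediate from the definition of $D^t_S$ and the characterisation \eqref{super} of $\mathcal V_{st}(S)$ — so Theorem~2.2(1) gives $D^t_G(\Theta^G(M,\sigma,\tilde r))=(-1)^{\dim A_{L_r}}\,\Theta^G(M,\sigma,\tilde r)$. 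On the other hand, expanding $r^t_{L,G}(i_{G,S}(\Theta^S(M,\sigma,\tilde r)))$ by the weak-restriction analogue of~(2.2) used in the proof of Lemma~3.1, only the indices $w\in W^{S,L}$ with $wS\subseteq L$ survive — since $\Theta^S(M,\sigma,\tilde r)$ and all its $W_0$-conjugates are supertempered — each producing, after application of $i_{G,L}$ and by the $W_0$-invariance of the triplets, exactly $\Theta^G(M,\sigma,\tilde r)$; the projection $r^t_{L,G}(\sigma)$ retains precisely those $w$ for which $(wM,w\sigma)$ is associate to $(M,\sigma)$ inside $L$. Hence $\sum_{L\in\mathcal L_A(M)}(-1)^{\dim A_L}\,i_{G,L}\circ r^t_{L,G}(\sigma)$ scales $\Theta^G(M,\sigma,\tilde r)$ by $\sum_{L\in\mathcal L_A(M)}(-1)^{\dim A_L}c_L(\tilde r)$, where $c_L(\tilde r)$ is the number of such retained $w$; equivalently, $c_L(\tilde r)=\lvert\{x\in\widetilde{R_\sigma}:x^{-1}\tilde r x\in\widetilde{R^L_\sigma}\}\rvert/\lvert\widetilde{R^L_\sigma}\rvert$ is the class function by which $\textsl{Ind}^{\widetilde{R_\sigma}}_{\widetilde{R^L_\sigma}}\circ\textsl{Res}^{\widetilde{R_\sigma}}_{\widetilde{R^L_\sigma}}$ acts on $\Bbb C(\Pi(\widetilde{R_\sigma},\chi_\sigma))$ by multiplication, so that $D_{\widetilde{R_\sigma},\chi_\sigma}$ is itself multiplication by the class function $\sum_{L\in\mathcal L_A(M)}(-1)^{\dim A_L}c_L$.

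\textbf{Where the difficulty lies.} Everything thus reduces to the identity $\sum_{L\in\mathcal L_A(M)}(-1)^{\dim A_L}c_L(\tilde r)=(-1)^{\dim A_{L_r}}=\xi_{\widetilde{R_\sigma}}(\tilde r)$: that replacing the weak restriction by its $\sigma$-component and the index set $\mathcal L$ by $\mathcal L_A(M)$ does not change $D^t_G$ on $\Bbb C(\Pi_\sigma(G))$, or, on the finite-group side, that the class function $\sum_{L\in\mathcal L_A(M)}(-1)^{\dim A_L}c_L$ is the sign character $\xi_{\widetilde{R_\sigma}}$. This is an Euler-characteristic / M\"obius-inversion statement over the poset $\mathcal L_A(M)$, and it is the main obstacle of the proof — the $\sigma$-packet analogue of the combinatorics behind Theorem~2.2(5) in~[6]. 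I expect to either import it from~[6] or to prove it directly by induction on $\dim A_{L_r}$, peeling off the top term $L=L_r$ (for which $c_{L_r}=1$ since only $w\in W_\sigma$ is retained) and resolving the remaining alternating sum over the proper part of the poset. Once this identity is established, the rightmost square holds and, by the reductions of the first paragraph, so do all of parts~(1) and~(2).
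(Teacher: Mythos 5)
Your overall scaffolding is the same as the paper's: prove the rightmost square $I^G\circ D_{\widetilde{R_\sigma},\chi_{\sigma}}=D^t_G\circ I^G$ first, then get the left and middle squares by composing with $I^L$, $I^G$ and using [6, Th\'eor\`eme 6] together with Theorem 2.2, and your derivation of part (2) by conjugating the involution $D^t_G$ through the isomorphism $I^G$ is even shorter than the paper's direct expansion of $D^2_{\widetilde{R_\sigma},\chi_\sigma}$ via transitivity of $\textsl{Ind}/\textsl{Res}$ and the alternating-sum identity [16, Lemme 1.2.3]. The genuine gap is in the rightmost square itself, which is the heart of the theorem and which you do not prove. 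Having correctly reduced it (via [6, Th\'eor\`eme 6] and Lemma 3.1) to the identity $D^t_G=\sum_{L\in\mathcal L_A(M)}(-1)^{\dim A_L}\, i_{G,L}\circ r^t_{L,G}(\sigma)$ on $\mathbb{C}(\Pi_\sigma(G))$, you leave the formal route and try to verify it on the basis $\Theta^G(M,\sigma,\tilde r)$, reducing everything to the class-function identity $\sum_{L\in\mathcal L_A(M)}(-1)^{\dim A_L}c_L(\tilde r)=(-1)^{\dim A_{L_r}}$ --- and this you explicitly defer (``I expect to either import it from [6] or to prove it directly by induction''). That is a missing step at the decisive point, and everything else in your write-up is conditional on it. It cannot be imported from [6], which concerns $\mathcal V(G)$ and not $\widetilde{R_\sigma}$; inside the present paper the identity is essentially Proposition 3.3(2), $D_{\widetilde{R_\sigma},\chi_\sigma}(1_{\widetilde{R_\sigma}})=\xi_{\widetilde{R_\sigma}}$, which is proved \emph{as a consequence} of Theorem 3.2, so invoking it here would be circular. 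Your sketched induction is also shaky: the claim $c_{L_r}(\tilde r)=1$ is not justified, since the numerator of $c_{L_r}(\tilde r)$ counts the elements of $\widetilde{R_\sigma}$ stabilising $\mathfrak a_{L_r}$ setwise, a group which can strictly contain $\widetilde{R^{L_r}_\sigma}$ (the pointwise stabiliser).

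By contrast, the paper closes the square from exactly the point you had already reached: for $L\in\mathcal L_A(M)$, [6, Th\'eor\`eme 6] gives $i_{G,L}\circ r^t_{L,G}(\sigma)\circ I^G=I^G\circ\textsl{Ind}^{\widetilde{R_\sigma}}_{\widetilde{R^{L}_\sigma}}\circ\textsl{Res}^{\widetilde{R_\sigma}}_{\widetilde{R^{L}_\sigma}}$; summing with signs over $\mathcal L$ and killing the $L\notin\mathcal L_A(M)$ terms by Lemma 3.1 yields $D^t_G\circ I^G=I^G\circ D_{\widetilde{R_\sigma},\chi_\sigma}$ directly, with no basis computation and no M\"obius-type inversion. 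The one substantive issue your detour was meant to address --- that $D^t_G$ is defined with $r^t_{L,G}$ while the target identity uses the projection $r^t_{L,G}(\sigma)$ --- is real, but your computation does not resolve it either; it only repackages it as the unproved combinatorial identity, making the missing step larger rather than smaller. Your intermediate observations are correct and potentially useful (for instance that $D^t_S$ acts by $(-1)^{\dim A_S}$ on $\mathcal V_{st}(S)$, hence $D^t_G(\Theta^G(M,\sigma,\tilde r))=(-1)^{\dim A_{L_r}}\,\Theta^G(M,\sigma,\tilde r)$, and that $\textsl{Ind}\circ\textsl{Res}$ is multiplication by $\textsl{Ind}^{\widetilde{R_\sigma}}_{\widetilde{R^{L}_\sigma}}(1)$), but as it stands the proof of part (1), and with it part (2), is incomplete.
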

\begin{proof}~~
\begin{enumerate}
\item[1-] \textsl{Let $L \in \mathcal {L}_A(M)$, according to the last diagram :}
$$r^{t}_{L,G}({\sigma})\circ I^{G}= I^{L} \circ  \textsl{Res}^{\widetilde{R_\sigma}}_{\widetilde{R^{L}_\sigma}}.$$
\textsl{The composition on both sides of this last expression with the induction functor $ i_ {G, L}, $ gives us:
 $$i_{G,L} \circ r^{t}_{L,G}({\sigma})\circ I^{G}=i_{G,L}\circ I^{L} \circ  \textsl{Res}^{\widetilde{R_\sigma}}_{\widetilde{R^{L}_\sigma}}$$
Since $$i_{G,L}\circ I^{L}=I^{G} \circ  \textsl{Ind}^{\widetilde{R_\sigma}}_{\widetilde{R^{L}_\sigma}}$$
then $$i_{G,L} \circ r^{t}_{L,G}({\sigma})\circ I^{G}=I^{G} \circ  \textsl{Ind}^{\widetilde{R_\sigma}}_{\widetilde{R^{L}_\sigma}}\circ  \textsl{Res}^{\widetilde{R_\sigma}}_{\widetilde{R^{L}_\sigma}}$$
If we apply the sum on $\mathcal {L}$, we obtain:
$$
 \sum_{L\in\mathcal {L}}(-1)^{\dim(A_L)}\hspace{0.2cm}  i_{G,L}\circ r^{t}_{L,G}({\sigma}) \circ I^{G}=$$ $$I^{G} \circ  \sum_{L\in\mathcal {L}}(-1)^{\dim(A_L)}\hspace{0.2cm}  \textsl{Ind}^{\widetilde{R_\sigma}}_{\widetilde{R^{L}_\sigma}}\circ  \textsl{Res}^{\widetilde{R_\sigma}}_{\widetilde{R^{L}_\sigma}}$$ 
and since $\mathcal {L_{A}}(M)\subset\mathcal {L}$, then by decomposing the sum we get: 
$$\hspace{1.5cm} D^t_G\circ I^G= I^G\circ D _ {\widetilde {R_\sigma}, \chi_{\sigma}} + \left[\sum_{L\notin\mathcal {L}_A(M)}(-1)^{\dim(A_L)+1}\hspace{0.1cm}  i_{G,L}\circ r^{t}_{L,G}({\sigma})\right] \circ I^G$$
by lemma $3.1$: $\sum\limits_{L\notin\mathcal {L}_A(M)}(-1)^{\dim(A_L)+1}\hspace{0.1cm}  i_{G,L}\circ r^{t}_{L,G}({\sigma})$ vanishes, hence:} $$D^t_G\circ I^G= I^G\circ D _ {\widetilde {R_\sigma}, \chi_{\sigma}}.$$ 
\textsl{Let now $L \in \mathcal {L}_{A}(M),$ then  following the last result, we have on the one hand:
$$
\hspace{1.5cm} I^{L} \circ \textsl{Res}^{\widetilde{R_\sigma}}_{\widetilde{R^{L}_\sigma}}\circ D _ {\widetilde {R_\sigma}, \chi_{\sigma}} = r^{t}_{L,G}({\sigma})\circ I^{G} \circ D _ {\widetilde {R_\sigma}, \chi_{\sigma}} = r^{t}_{L,G}({\sigma})\circ D^{t}_{G} \circ I^{G}
$$
and on the other hand, following theorem $2.2.1$, and again the last result but applied to $ L $, we get:}
$$
\hspace{2cm} I^{L} \circ D _ {\widetilde {R^L_\sigma}, \chi_{\sigma}}\circ \textsl{Res}^{\widetilde{R_\sigma}}_{\widetilde{R^{L}_\sigma}} = D^{t}_{L} \circ I^{L}\circ \textsl{Res}^{\widetilde{R_\sigma}}_{\widetilde{R^{L}_\sigma}} = D^{t}_{L} \circ r^{t}_{L,G}({\sigma})\circ  I^{G}$$ $$\hspace{1.8cm}= r^{t}_{L,G}({\sigma})\circ D^{t}_{G} \circ I^{G}$$
which implies: $$\hspace{2cm} I^{L} \circ \textsl{Res}^{\widetilde{R_\sigma}}_{\widetilde{R^{L}_\sigma}}\circ D _ {\widetilde {R_\sigma}, \chi_{\sigma}}  = I^{L} \circ D _ {\widetilde {R^L_\sigma}, \chi_{\sigma}}\circ \textsl{Res}^{\widetilde{R_\sigma}}_{\widetilde{R^{L}_\sigma}}.$$
Since the operator $ I ^ {L} $ is an isomorphism for all $ L \in \mathcal {L_ {A}} (M) $, we will have: $$\textsl{Res}^{\widetilde{R_\sigma}}_{\widetilde{R^{L}_\sigma}}\circ D _ {\widetilde {R_\sigma}, \chi_{\sigma}}= D _ {\widetilde {R^L_\sigma}, \chi_{\sigma}}\circ \textsl{Res}^{\widetilde{R_\sigma}}_{\widetilde{R^{L}_\sigma}}.$$
In the same way we have:
\begin{align*}
I^G\circ \textsl{Ind}^{\widetilde{R_\sigma}}_{\widetilde{R^{L}_\sigma}}\circ D _ {\widetilde {R^L_\sigma}, \chi_{\sigma}}&= i_{G, L}\circ I^L\circ D _ {\widetilde {R^L_\sigma}, \chi_{\sigma}}\\
&=i_{G, L}\circ D^{t}_L\circ I^L\\
&= D^{t}_G\circ i_{G, L}\circ I^L\\
&= D^{t}_G\circ I^G\circ \textsl{Ind}^{\widetilde{R_\sigma}}_{\widetilde{R^{L}_\sigma}}\\
&= I^G\circ D _ {\widetilde {R_\sigma}, \chi_{\sigma}}\circ \textsl{Ind}^{\widetilde{R_\sigma}}_{\widetilde{R^{L}_\sigma}}.
\end{align*}
\item[2-]{By definition of the operator $D _ {\widetilde {R_\sigma}, \chi_{\sigma}}$, and the transitivity of the induction and restriction functors, we have:
\begin{align*}
D^2_{\widetilde{R_\sigma}}&= D _ {\widetilde {R_\sigma}, \chi_{\sigma}}\circ\left[ \sum_{L\in \mathcal {L}_{A}(M)}(-1)^{\dim(A_L)}~ \textsl{Ind}^{\widetilde{R_\sigma}}_{\widetilde{R^{L}_\sigma}}\circ \textsl{Res}^{\widetilde{R_\sigma}}_{\widetilde{R^{L}_\sigma}}\right]\\
&=\sum_{L\in \mathcal {L}_{A}(M)}(-1)^{\dim(A_L)} D _ {\widetilde {R_\sigma}, \chi_{\sigma}}\circ \textsl{Ind}^{\widetilde{R_\sigma}}_{\widetilde{R^{L}_\sigma}}\circ \textsl{Res}^{\widetilde{R_\sigma}}_{\widetilde{R^{L}_\sigma}}\\
&=\sum_{L\in \mathcal {L}_{A}(M)}(-1)^{\dim(A_L)}~ \textsl{Ind}^{\widetilde{R_\sigma}}_{\widetilde{R^{L}_\sigma}}\circ D _ {\widetilde {R^L_\sigma}, \chi_{\sigma}}\circ \textsl{Res}^{\widetilde{R_\sigma}}_{\widetilde{R^{L}_\sigma}}\\
&=\sum_{L\in \mathcal {L}_{A}(M)}(-1)^{\dim(A_L)}~ \textsl{Ind}^{\widetilde{R_\sigma}}_{\widetilde{R^{L}_\sigma}}\circ\sum_{K\in\mathcal{L}^{L}_A(M)}(-1)^{\dim(A_K)}~ \textsl{Ind}^{\widetilde{R^L_\sigma}}_{\widetilde{R^K_\sigma}}\circ \textsl{Res}^{\widetilde{R^L_\sigma}}_{\widetilde{R^K_\sigma}}\circ \textsl{Res}^{\widetilde{R_\sigma}}_{\widetilde{R^{L}_\sigma}}\\
&=\sum_{L\in \mathcal {L}_{A}(M)}(-1)^{\dim(A_L)} \sum_{K\in\mathcal{L}^{L}_A(M)}(-1)^{\dim(A_K)}~ \textsl{Ind}^{\widetilde{R_\sigma}}_{\widetilde{R^K_\sigma}}\circ \textsl{Res}^{\widetilde{R_\sigma}}_{\widetilde{R^K_\sigma}}\\
&=\sum_{K\in \mathcal {L}_{A}(M)} \sum_{L\in\mathcal{L}_A(K)}(-1)^{\dim(A_K)- \dim(A_L)}~ \textsl{Ind}^{\widetilde{R_\sigma}}_{\widetilde{R^K_\sigma}}\circ \textsl{Res}^{\widetilde{R_\sigma}}_{\widetilde{R^K_\sigma}}.
\end{align*}
Since: $\sum\limits_{L\in\mathcal{L}_A(K)}(-1)^{\dim(A_K)- \dim(A_L)}=0$ if $K\neq G$, $[16,\textsl{Lemme}~ 1.2.3 ],$ then: $D _ {\widetilde {R_\sigma}, \chi_{\sigma}}^2= Id.$}
\end{enumerate}
\end{proof}
\begin{Prop}\textsl{Let $(M, \sigma)$ be a discrete pair of $G$}
\begin{enumerate}
\item[1-] \textsl{For all $\rho\in \Pi (\widetilde{R_\sigma}, \chi_\sigma), D _ {\widetilde {R_\sigma}, \chi_{\sigma}}(\theta_\rho)$ is irreducible.}
\item[2-]\textsl{The dual in ${\widetilde{R_\sigma}}$ of the trivial irreducible character, $1_{\widetilde{R_\sigma}}\in\Bbb C(\Pi(\widetilde{R_\sigma},\chi_{\sigma})),$ is equal to the sign application $\xi_{\widetilde{R_\sigma}}.$}
 \end{enumerate}
\end{Prop}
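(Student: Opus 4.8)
The plan is to derive both parts from the identity $D^{t}_{G}\circ I^{G}=I^{G}\circ D_{\widetilde{R_\sigma},\chi_{\sigma}}$ established above, together with the fact that $I^{G}$ is a linear isomorphism carrying the basis $\{\theta_{\rho}\}$ of $\mathbb{C}(\Pi(\widetilde{R_\sigma},\chi_{\sigma}))$ onto the basis $\{\Theta_{\pi_{\rho}}\}$ of $\mathbb{C}(\Pi_{\sigma}(G))$. For (1), fix $\rho\in\Pi(\widetilde{R_\sigma},\chi_{\sigma})$. Then $I^{G}(\theta_{\rho})=\Theta_{\pi_{\rho}}$ is the character of an irreducible constituent $\pi_{\rho}$ of $i_{G,M}(\sigma)$; by the properties of $D^{t}_{G}$ recalled above (it preserves, up to sign, the irreducible characters occurring in $i_{G,M}(\Theta_{\sigma})$), one has $D^{t}_{G}(\Theta_{\pi_{\rho}})=\pm\,\Theta_{\pi'}$ for some $\pi'\in\Pi_{\sigma}(G)$. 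Letting $\rho'\in\Pi(\widetilde{R_\sigma},\chi_{\sigma})$ be defined by $I^{G}(\theta_{\rho'})=\Theta_{\pi'}$ and applying $(I^{G})^{-1}$ to $D^{t}_{G}(I^{G}(\theta_{\rho}))=I^{G}(D_{\widetilde{R_\sigma},\chi_{\sigma}}(\theta_{\rho}))$ yields $D_{\widetilde{R_\sigma},\chi_{\sigma}}(\theta_{\rho})=\pm\,\theta_{\rho'}$, i.e. the character of an irreducible representation of $\widetilde{R_\sigma}$ up to sign — which is the meaning of ``irreducible'' here, exactly as for $D^{t}_{G}$. The sign is then determined, when it is needed, by the value-by-value computation in (2).

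For (2), I would compute $D_{\widetilde{R_\sigma},\chi_{\sigma}}(1_{\widetilde{R_\sigma}})$ directly. Since the restriction of the trivial character to any $\widetilde{R^{L}_\sigma}$ is trivial, and since $Z_{\sigma}\subseteq\widetilde{R^{L}_\sigma}$ for every $L\in\mathcal{L}_{A}(M)$, the character $\textsl{Ind}^{\widetilde{R_\sigma}}_{\widetilde{R^{L}_\sigma}}(1_{\widetilde{R^{L}_\sigma}})$ is the inflation to $\widetilde{R_\sigma}$ of the permutation character of $R_{\sigma}$ on $R_{\sigma}/R^{L}_\sigma$; hence, for $\tilde r\in\widetilde{R_\sigma}$ lying above $r\in R_{\sigma}$,
\[
D_{\widetilde{R_\sigma},\chi_{\sigma}}(1_{\widetilde{R_\sigma}})(\tilde r)=\sum_{L\in\mathcal{L}_{A}(M)}(-1)^{\dim A_{L}}\,\#\bigl\{\,xR^{L}_\sigma\in R_{\sigma}/R^{L}_\sigma\ :\ x^{-1}rx\in R^{L}_\sigma\,\bigr\}.
\]
The key step is to rewrite the condition $x^{-1}rx\in R^{L}_\sigma=R_{\sigma}\cap W^{L}$ geometrically. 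By [7] there is a unique $L_{r}\in\mathcal{L}_{A}(M)$ with $\mathfrak{a}_{M}^{r}=\mathfrak{a}_{L_{r}}$, and a representative of $r$ lies in $Z_{G}(A_{L_{r}})=L_{r}$, so $r\in R^{L_{r}}_\sigma$. Any $x\in R_{\sigma}\subseteq W$ normalises $A_{M}$ (hence $M$) and stabilises $\Delta_{\sigma}$, hence the chamber $\mathfrak{a}_{\sigma}^{+}$; consequently $x^{-1}L_{r}x\in\mathcal{L}_{A}(M)$, with $\mathfrak{a}_{x^{-1}L_{r}x}=x^{-1}\mathfrak{a}_{L_{r}}=\mathfrak{a}_{M}^{x^{-1}rx}$. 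Using that an element of $W^{L}$ fixes $\mathfrak{a}_{L}$ pointwise, while a sub-Levi gives a sub-relative-Weyl-group, one obtains: for $L\in\mathcal{L}_{A}(M)$, $\ x^{-1}rx\in R^{L}_\sigma$ if and only if $x^{-1}L_{r}x\subseteq L$.

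With this dictionary, I would interchange the two summations and reduce, after grouping the cosets according to the Levi subgroup $x^{-1}L_{r}x$, to an inner alternating sum $\sum_{L}(-1)^{\dim A_{L}}$ extended over the $L\in\mathcal{L}_{A}(M)$ containing a fixed conjugate of $L_{r}$ — that is, over a subset of an interval of Levi subgroups carved out by Arthur's compatibility condition. This is precisely the situation covered by $[16,\textsl{Lemme}~1.2.3]$ — the cancellation lemma already invoked above for $D^{2}_{\widetilde{R_\sigma},\chi_{\sigma}}=\mathrm{id}$ — so all terms cancel except the bottom one, contributing $(-1)^{\dim A_{x^{-1}L_{r}x}}=(-1)^{\dim A_{L_{r}}}$; collecting the remaining cosets then leaves exactly $(-1)^{\dim A_{L_{r}}}=\xi_{\widetilde{R_\sigma}}(\tilde r)$. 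Thus $D_{\widetilde{R_\sigma},\chi_{\sigma}}(1_{\widetilde{R_\sigma}})=\xi_{\widetilde{R_\sigma}}$, and together with (1) this shows $\xi_{\widetilde{R_\sigma}}$ is (up to sign) the character of an irreducible representation in $\Pi(\widetilde{R_\sigma},\chi_{\sigma})$, as announced in the introduction. I expect the main obstacle to be exactly this last combinatorial reduction: one must keep careful track of the coset counts and of the subgroup orders $|R^{L}_\sigma|$ that appear when passing between coset counts and element counts, and check that the restriction of the index set to $\mathcal{L}_{A}(M)$ (rather than to all Levi subgroups above $x^{-1}L_{r}x$) is still within reach of $[16,\textsl{Lemme}~1.2.3]$; once that is in place, the collapse to the single top term and the identification with $\xi_{\widetilde{R_\sigma}}$ follow at once.
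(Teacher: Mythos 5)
Part (1) of your proposal is fine: it is essentially the paper's own argument (apply $D^{t}_{G}\circ I^{G}=I^{G}\circ D_{\widetilde{R_\sigma},\chi_{\sigma}}$ and the fact that $D^{t}_{G}$ preserves irreducible characters of $i_{G,M}(\Theta_\sigma)$ up to sign, then transport back through the isomorphism $I^{G}$). The problem is part (2), precisely at the step you yourself flagged. The lemma $[16,\ \textsl{Lemme}~1.2.3]$ asserts \emph{total} cancellation, $\sum_{L\in\mathcal{L}_A(K)}(-1)^{\dim A_K-\dim A_L}=0$ for $K\neq G$; it does not make an alternating sum over the interval above $K$ collapse to the bottom term $(-1)^{\dim A_K}$. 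Moreover the interchange of summations is not legitimate as written, because the index sets $R_\sigma/R^{L}_\sigma$ vary with $L$: rewritten over elements, each $x\in R_\sigma$ contributes $\sum_{L\supseteq x^{-1}L_r x}(-1)^{\dim A_L}\,\vert R^{L}_\sigma\vert^{-1}$, and the weights $\vert R^{L}_\sigma\vert^{-1}$ destroy the bare alternating-sum cancellation. A rank-one check shows the reduction fails: take $R_\sigma=\{1,s\}$ with $s$ elliptic and $\mathcal{L}_A(M)=\{M,G\}$, and evaluate at $r=1$ (so $L_r=M$). The coset counts are $2$ for $L=M$ and $1$ for $L=G$, so the true value is $2(-1)^{\dim A_M}+(-1)^{\dim A_G}=(-1)^{\dim A_M}=\xi_{\widetilde{R_\sigma}}(\tilde 1)$, whereas your unweighted inner sum $\sum_{L\in\{M,G\}}(-1)^{\dim A_L}$ is $0$ and the claimed ``collapse to the bottom term'' would give $2(-1)^{\dim A_M}$. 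So the correct answer emerges only from a genuinely weighted (M\"obius-type) computation over the poset of Levi subgroups, which your sketch does not supply. (A smaller point, which the paper also glosses over: describing $\textsl{Ind}^{\widetilde{R_\sigma}}_{\widetilde{R^{L}_\sigma}}(1_{\widetilde{R^{L}_\sigma}})$ as the inflation of the permutation character on $R_\sigma/R^{L}_\sigma$ tacitly assumes $\chi_\sigma$ trivial on $Z_\sigma$.)

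The paper's proof of (2) sidesteps all of this combinatorics, and you can repair your argument by adopting its two-step reduction. First treat regular elements: if $\mathfrak{a}_M^{r}=\mathfrak{a}_G$, then $\tilde r$ is not conjugate into any $\widetilde{R^{L}_\sigma}$ with $L\in\mathcal{L}_A(M)$, $L\neq G$, so every induced term vanishes at $\tilde r$ and $D_{\widetilde{R_\sigma},\chi_\sigma}(1_{\widetilde{R_\sigma}})(\tilde r)=(-1)^{\dim A_G}$. Then, for general $r$ with $\mathfrak{a}_M^{r}=\mathfrak{a}_{L_r}$, use the commutation $\textsl{Res}^{\widetilde{R_\sigma}}_{\widetilde{R^{L_r}_\sigma}}\circ D_{\widetilde{R_\sigma},\chi_\sigma}=D_{\widetilde{R^{L_r}_\sigma},\chi_\sigma}\circ\textsl{Res}^{\widetilde{R_\sigma}}_{\widetilde{R^{L_r}_\sigma}}$ from Theorem $3.2$(1): since $\tilde r\in\widetilde{R^{L_r}_\sigma}$ and $r$ is regular there, the first step applied with $L_r$ in place of $G$ gives the value $(-1)^{\dim A_{L_r}}=\xi_{\widetilde{R_\sigma}}(\tilde r)$. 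This restriction trick is exactly what replaces the coset-counting you attempted, and it is where your proposal needs to be rewritten.
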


\begin{proof}
\textsl{Let $\theta_\rho\in\mathbb{C}(\Pi(\tilde{\Re}_{\sigma}, \chi_\sigma)$, according to the last theorem we have : $$I^G(D _ {\widetilde {R_\sigma}, \chi_{\sigma}}(\theta_\rho))= D^t_G(I^G(\theta_\rho))$$ since the operator $ D ^ t_G $ preserves up to isomorphism, the irreducibility of the characters irreducible, and $I^G$ is an isomorphism, it follow that $D _ {\widetilde {R_\sigma}, \chi_{\sigma}}(\theta_\rho)$ is also irreducible. For the second assertion, we have by definition:
$$
 D _ {\widetilde {R_\sigma}, \chi_{\sigma}}(1_{\widetilde{R_\sigma}}(\tilde{r})) = \sum_{L\in \mathcal {L}
_{A}(M)}(-1)^{dim(A_{L})}\hspace{0,2cm}\textsl{Ind}^{\widetilde{R_\sigma}}_{\widetilde{R^{L}_\sigma}}(1_{\widetilde{R^L_\sigma}}(\tilde{r})), \hspace{0.5cm}\tilde{r}\in \widetilde{R_\sigma}
$$
notice that if:~ $\mathfrak{a}_{M}^{r}=\mathfrak{a}_{G}$ for a certain $r \in R_{\sigma}$ then  $\tilde{r}\in \widetilde{R_\sigma}$ is not conjugated to any element of $\widetilde{R^L_\sigma},\hspace{0,2cm}L \in\mathcal {L}_{A}(M)$ therefore $\textsl{Ind}^{\widetilde{R_\sigma}}_{\widetilde{R^{L}_\sigma}} (1_{\widetilde{R^L_\sigma}}(\tilde{r})) = 0$, hence:
$$(\ast)\hspace{3cm} D _ {\widetilde {R_\sigma}, \chi_{\sigma}}(1_{\widetilde{R_\sigma}}(\tilde{r}))= (-1)^{dim(A_{G})}\hspace{1cm}\textsl{si}\hspace {0.2cm}\mathfrak{a}_{M}^{r}=\mathfrak{a}_{G}.\hspace{2.4cm}$$
Now if  $r \in R_{\sigma}$ such that $\mathfrak{a}_{M}^{r}=\mathfrak{a}_{L_{r}}$ then its inverse image $\tilde{r} \in \widetilde{R^{L_{r}}}_{\sigma}$, so:
$$
D_ {\widetilde {R_\sigma}, \chi_{\sigma}}(1_{\widetilde{R_\sigma}}(\tilde{r})) =  \textsl{Res}^{\widetilde{R_\sigma}}_{\widetilde{R^{L_{r}}_\sigma}} (D _ {\widetilde {R_\sigma}, \chi_{\sigma}}(1_{\widetilde{R_\sigma}}(\tilde{r})) 
= D_{\widetilde{R^{L_{r}}_\sigma}} ( \textsl{Res}^{\widetilde{R_\sigma}}_{\widetilde{R^{L_{r}}_\sigma}}(1_{\widetilde{R_\sigma}}(\tilde{r}))$$ $$\hspace{1.4cm} = D_{\widetilde{R^{L_{r}}_\sigma}} (1_{\widetilde{R^{L_{r}}_\sigma}}(\tilde{r})) = (-1)^{\dim(A_{L_{r}})}= \xi_{\widetilde{R_\sigma}}(\tilde{r})
$$
the last equality is due to the expression $(\ast)$ applied to $L_{r}$ instead of $G.$}
\end{proof}
\vskip 2em
\maketitle

\large{\section{The character of Steinberg}}
The purpose of this section is to define a  special character, belonging to
 $\mathcal {V}(G)$. After that, we will see some of its remarkable properties.
Let $(M, \sigma)$ be a discrete pair of $G$, and  $L \in \mathcal {L}_{A}(M)$, for a representation  $\tau_{1}\in \Pi_ {\sigma} (L)$ corresponding to the trivial representation
$1_{\widetilde{R^L_\sigma}}
\in\Pi(\widetilde{R^L_\sigma},\chi_{\sigma})$, we call character of
Steinberg associated to $(M,\sigma)$ the character:
\begin{equation}
 St^{G}_{(M,\sigma)}= \sum_{L\in \mathcal {L}_{A}(M)} (-1)^{\dim(A_{L})}\hspace{0,2cm} i_{G,L}(\Theta_{\tau_{1}}).
\end{equation}
\begin{Th}\textsl {Let $ (M, \sigma) $ be a discrete pair of $ G,$ and $\pi_1 \in \Pi_{\sigma} (G) $ corresponding to the trivial representation, then the character of $\pi_1$ is  equal to the dual of  $St^{G}_{(M, \sigma)}$. In particular $St^{G}_{(M, \sigma)}$ is an irreducible character of $i_{G, M}(\Theta_{\sigma}).$}
\end{Th}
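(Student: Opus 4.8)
The plan is to compute the dual $D^t_G\big(St^G_{(M,\sigma)}\big)$ directly from the definition and recognize the answer as $\Theta_{\pi_1}$, then invoke Theorem~2.4 (item 5) together with irreducibility of $D^t_G$ to deduce that $St^G_{(M,\sigma)}$ itself is an irreducible constituent of $i_{G,M}(\Theta_\sigma)$. The key observation is that $St^G_{(M,\sigma)}$ is, up to the isomorphism $I^G$, nothing but the dual operator $D_{\widetilde{R_\sigma},\chi_\sigma}$ applied to the trivial character: indeed, by the commutative diagram preceding the definition of $I^L$ one has $i_{G,L}\circ I^L = I^G\circ\textsl{Ind}^{\widetilde{R_\sigma}}_{\widetilde{R^L_\sigma}}$, and $\tau_1$ corresponds to $1_{\widetilde{R^L_\sigma}}$ while $\pi_1$ corresponds to $1_{\widetilde{R_\sigma}}$; matching the defining sum \eqref{} for $St^G_{(M,\sigma)}$ with the definition of $D_{\widetilde{R_\sigma},\chi_\sigma}$ in the display following it, one sees
\[
St^G_{(M,\sigma)} \;=\; I^G\!\left(\sum_{L\in\mathcal L_A(M)}(-1)^{\dim(A_L)}\,\textsl{Ind}^{\widetilde{R_\sigma}}_{\widetilde{R^L_\sigma}}\big(\textsl{Res}^{\widetilde{R_\sigma}}_{\widetilde{R^L_\sigma}}(1_{\widetilde{R_\sigma}})\big)\right)
\;=\; I^G\big(D_{\widetilde{R_\sigma},\chi_\sigma}(1_{\widetilde{R_\sigma}})\big),
\]
using that $\textsl{Res}^{\widetilde{R_\sigma}}_{\widetilde{R^L_\sigma}}(1_{\widetilde{R_\sigma}}) = 1_{\widetilde{R^L_\sigma}}$ and that $I^L(1_{\widetilde{R^L_\sigma}}) = \Theta_{\tau_1}$.

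Next I would apply $D^t_G$ and use part~1 of Theorem~2.6 (the commutative square $D^t_G\circ I^G = I^G\circ D_{\widetilde{R_\sigma},\chi_\sigma}$) together with part~2 (that $D_{\widetilde{R_\sigma},\chi_\sigma}$ is an involution):
\[
D^t_G\big(St^G_{(M,\sigma)}\big) \;=\; D^t_G\big(I^G(D_{\widetilde{R_\sigma},\chi_\sigma}(1_{\widetilde{R_\sigma}}))\big)
\;=\; I^G\big(D^2_{\widetilde{R_\sigma},\chi_\sigma}(1_{\widetilde{R_\sigma}})\big)
\;=\; I^G(1_{\widetilde{R_\sigma}}) \;=\; \Theta_{\pi_1}.
\]
Since $D^t_G$ is an involution (Theorem~2.4, item~2), applying it once more gives $St^G_{(M,\sigma)} = D^t_G(\Theta_{\pi_1})$, i.e. the Steinberg character is the dual of $\Theta_{\pi_1}$, which is the first claim. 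Alternatively, and perhaps more cleanly for the write-up, one can observe that by Proposition~3.5 item~2 the dual of $1_{\widetilde{R_\sigma}}$ is the sign character $\xi_{\widetilde{R_\sigma}}$, and then $St^G_{(M,\sigma)} = I^G(D_{\widetilde{R_\sigma},\chi_\sigma}(1_{\widetilde{R_\sigma}})) = I^G(\xi_{\widetilde{R_\sigma}})$ corresponds to the sign character; this recovers the assertion from the introduction that $St^G_{(M,\sigma)}$ corresponds to $\xi_{\widetilde{R_\sigma}}$ and makes irreducibility transparent.

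For the second claim, irreducibility now follows in two ways that I would cross-check. Directly: Proposition~3.5 item~1 says $D_{\widetilde{R_\sigma},\chi_\sigma}(\theta_\rho)$ is irreducible for every $\rho$, so $D_{\widetilde{R_\sigma},\chi_\sigma}(1_{\widetilde{R_\sigma}}) = \xi_{\widetilde{R_\sigma}}$ is an irreducible character in $\Bbb C(\Pi(\widetilde{R_\sigma},\chi_\sigma))$, hence its image under the isomorphism $I^G$ — namely $St^G_{(M,\sigma)}$ — is the character of an irreducible representation lying in $\Pi_\sigma(G)$, i.e. an irreducible constituent of $i_{G,M}(\Theta_\sigma)$. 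Indirectly: $\Theta_{\pi_1}$ is irreducible and a constituent of $i_{G,M}(\Theta_\sigma)$, so by Theorem~2.4 item~5 its dual $D^t_G(\Theta_{\pi_1}) = St^G_{(M,\sigma)}$ is again, up to sign, an irreducible constituent of $i_{G,M}(\Theta_\sigma)$; and the explicit sign-character description shows the sign is $+1$.

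I expect the only genuine obstacle to be bookkeeping rather than conceptual: one must make sure that the index set $\mathcal L_A(M)$ appearing in the definition of $St^G_{(M,\sigma)}$ matches exactly the index set in the definition of $D_{\widetilde{R_\sigma},\chi_\sigma}$ (it does, by construction), and that the restriction of the trivial character to each $\widetilde{R^L_\sigma}$ really is the trivial character of that subgroup — which is immediate — and that $\tau_1$, $\pi_1$ are compatibly normalized so that $I^L(1_{\widetilde{R^L_\sigma}}) = \Theta_{\tau_1}$ and $I^G(1_{\widetilde{R_\sigma}}) = \Theta_{\pi_1}$, which is precisely the definition of "corresponding to the trivial representation." Once these identifications are in place the proof is a two-line diagram chase.
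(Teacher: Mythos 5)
Your proposal is correct and follows essentially the same route as the paper: both identify $St^{G}_{(M,\sigma)}$ with $I^{G}(D_{\widetilde{R_\sigma},\chi_\sigma}(1_{\widetilde{R_\sigma}}))=I^{G}(\xi_{\widetilde{R_\sigma}})$ via the commutative diagram $i_{G,L}\circ I^{L}=I^{G}\circ \textsl{Ind}^{\widetilde{R_\sigma}}_{\widetilde{R^{L}_\sigma}}$, then use the commutation $D^{t}_{G}\circ I^{G}=I^{G}\circ D_{\widetilde{R_\sigma},\chi_\sigma}$ together with the involution property and Proposition 3.5 to conclude duality with $\Theta_{\pi_1}$ and irreducibility. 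The only difference is cosmetic: you compute $D^{t}_{G}(St^{G}_{(M,\sigma)})$ while the paper computes $D^{t}_{G}(\Theta_{\pi_1})$, which are equivalent since $D^{t}_{G}$ is an involution.
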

\begin{proof}\textsl{ Let $\Theta_{\pi_1}$ be the character of $\pi_1 \in \Pi_{\sigma}(G)$ corresponding to the trivial character $1_{\widetilde{R_\sigma}}$, then: $\Theta_{\pi_1} =  I^{G} (1_{\widetilde{R_\sigma}})$.\\
According to  the previous proposition and theorem $3.2.1$, we have:}
$$
D^{t}_{G}(\Theta_{\pi_1})= D^{t}_{G}\circ I^{G}(1_{\widetilde{R_\sigma}}) = I^{G} ( D _ {\widetilde {R_\sigma}, \chi_{\sigma}}(1_{\widetilde{R_\sigma}})) = I^{G} (\xi_{\widetilde{R_\sigma}} )
$$
\textsl{We have also:}
\begin{eqnarray*}
D^{t}_{G}(\Theta_{\pi_1}) = I^{G}\circ D _ {\widetilde {R_\sigma}, \chi_{\sigma}}(1_{\widetilde{R_\sigma}}) &=& I^{G}\circ  \sum_{L\in \mathcal {L}_{A}(M)}(-1)^{\dim(A_{L})}\hspace{0.1cm} \textsl{Ind}^{\widetilde{R_\sigma}}_{\widetilde{R^{L}_\sigma}} \circ \textsl{Res}^{\widetilde{R_\sigma}}_{\widetilde{R^{L}_\sigma}} (1_{\widetilde{R_\sigma}}) \\
&=&\sum_{L\in \mathcal {L}_{A}(M)}(-1)^{\dim(A_{L})}~  i_{G,L} \circ I^{L}(1_{\widetilde{R^L_\sigma}})\\ 
&=& \sum_{L\in \mathcal {L}_{A}(M)}(-1)^{dim(A_{L})}~ i_{G,L} (\Theta_{\tau_{1}})\\
&=& St^{G}_{(M,\sigma)} .
\end{eqnarray*}
\textsl{It follows that  $St^{G}_{(M,\sigma)}$ is an irreducible character of $i_{G,M}(\Theta_{\sigma}).$}\\
\end{proof}
\begin{Rem} \textsl{We have just found, that we have:
$$
St^{G}_{(M,\sigma)} = I^{G} (\xi_{\tilde{\Re}_{\sigma}} )
$$
which implies that $St_{(M,\sigma)}^{G}$ corresponding to $\xi_{\widetilde{R_\sigma}}.$ Therefore  $\xi_{\tilde{\Re}_{\sigma}}$ is an irreducible character, it is
defined as a sign character of the group $\widetilde{R_\sigma}.$}
\end{Rem}
\begin{Cor} \textsl{Let $(M,\sigma)$ be a discrete pair of $G$ and $L\in \mathcal {L}_{A}(M)$ then:}
$$r^{t}_{L,G}(\sigma)(St^{G}_{(M,\sigma)}) = St_{(M,\sigma)}^{L}.$$
\end{Cor}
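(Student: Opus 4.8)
The plan is to transport the whole identity through the Arthur isomorphisms $I^{G}$ and $I^{L}$, using the first identity in the proof of Theorem $3.2.1$ (i.e. $r^{t}_{L,G}(\sigma)\circ I^{G}= I^{L}\circ\textsl{Res}^{\widetilde{R_\sigma}}_{\widetilde{R^{L}_\sigma}}$) together with the intertwining relation $\textsl{Res}^{\widetilde{R_\sigma}}_{\widetilde{R^{L}_\sigma}}\circ D_{\widetilde {R_\sigma},\chi_{\sigma}}= D_{\widetilde {R^{L}_\sigma},\chi_{\sigma}}\circ\textsl{Res}^{\widetilde{R_\sigma}}_{\widetilde{R^{L}_\sigma}}$ obtained in that same proof, so that everything collapses to the trivial observation that restriction carries the trivial character of $\widetilde{R_\sigma}$ to the trivial character of $\widetilde{R^{L}_\sigma}$.

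First I would record the two facts needed about Steinberg characters and sign characters, at the level of $G$ and at the level of $L$. Since $L\in\mathcal{L}_{A}(M)$, the pair $(M,\sigma)$ is a discrete pair of the connected reductive $p$-adic group $L$, the group $R^{L}_\sigma= R_\sigma\cap W^{L}$ is the $R$-group of $i_{L,M}(\sigma)$, and all the attached objects $\widetilde{R^{L}_\sigma}$, $I^{L}$, $D_{\widetilde {R^{L}_\sigma},\chi_{\sigma}}$, $\xi_{\widetilde{R^{L}_\sigma}}$, $St^{L}_{(M,\sigma)}$ are defined. By Proposition $3.3.2$, read once over $G$ and once over $L$, the dual of the trivial character is the sign character:
\[
\xi_{\widetilde{R_\sigma}}= D_{\widetilde {R_\sigma},\chi_{\sigma}}(1_{\widetilde{R_\sigma}}),\qquad \xi_{\widetilde{R^{L}_\sigma}}= D_{\widetilde {R^{L}_\sigma},\chi_{\sigma}}(1_{\widetilde{R^{L}_\sigma}}).
\]
Moreover the computation in the proof of Theorem $4.1$ (recorded for $G$ in Remark $4.2$) gives $St^{G}_{(M,\sigma)}= I^{G}(\xi_{\widetilde{R_\sigma}})$, and the same computation applied verbatim over $L$, with $i_{L,K}\circ I^{K}= I^{L}\circ\textsl{Ind}^{\widetilde{R^{L}_\sigma}}_{\widetilde{R^{K}_\sigma}}$ in place of its $G$-analogue, gives $St^{L}_{(M,\sigma)}= I^{L}(\xi_{\widetilde{R^{L}_\sigma}})$.

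Then the proof is a short chain of substitutions. Starting from $r^{t}_{L,G}(\sigma)\bigl(St^{G}_{(M,\sigma)}\bigr)= r^{t}_{L,G}(\sigma)\circ I^{G}(\xi_{\widetilde{R_\sigma}})$, the square $r^{t}_{L,G}(\sigma)\circ I^{G}= I^{L}\circ\textsl{Res}^{\widetilde{R_\sigma}}_{\widetilde{R^{L}_\sigma}}$ rewrites this as $I^{L}\circ\textsl{Res}^{\widetilde{R_\sigma}}_{\widetilde{R^{L}_\sigma}}(\xi_{\widetilde{R_\sigma}})$; replacing $\xi_{\widetilde{R_\sigma}}$ by $D_{\widetilde {R_\sigma},\chi_{\sigma}}(1_{\widetilde{R_\sigma}})$ and pushing $\textsl{Res}$ past $D$ with the intertwining relation of Theorem $3.2.1$ turns it into $I^{L}\circ D_{\widetilde {R^{L}_\sigma},\chi_{\sigma}}\bigl(\textsl{Res}^{\widetilde{R_\sigma}}_{\widetilde{R^{L}_\sigma}}(1_{\widetilde{R_\sigma}})\bigr)$; finally $\textsl{Res}^{\widetilde{R_\sigma}}_{\widetilde{R^{L}_\sigma}}(1_{\widetilde{R_\sigma}})= 1_{\widetilde{R^{L}_\sigma}}$ since the restriction of the trivial representation is trivial, so the expression collapses to $I^{L}\bigl(D_{\widetilde {R^{L}_\sigma},\chi_{\sigma}}(1_{\widetilde{R^{L}_\sigma}})\bigr)= I^{L}(\xi_{\widetilde{R^{L}_\sigma}})= St^{L}_{(M,\sigma)}$, which is the desired equality.

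The argument is essentially bookkeeping; the only point demanding care — the main obstacle, such as it is — is to make sure the hypothesis $L\in\mathcal{L}_{A}(M)$ is precisely what legitimizes the entire level-$L$ picture: it is what identifies $R^{L}_\sigma$ with the $R$-group of $i_{L,M}(\sigma)$, hence makes $I^{L}$ an isomorphism and $D_{\widetilde {R^{L}_\sigma},\chi_{\sigma}}$, $\xi_{\widetilde{R^{L}_\sigma}}$ meaningful, and makes Proposition $3.3$, Theorem $3.2.1$ and the computation of Theorem $4.1$ all available at $L$; one should also note that the index set $\mathcal{L}^{L}_{A}(M)$ occurring in the definition $(4.1)$ of $St^{L}_{(M,\sigma)}$ is $\mathcal{L}^{L}(M)\cap\mathcal{L}_{A}(M)$, but this is already absorbed into the identity $St^{L}_{(M,\sigma)}= I^{L}(\xi_{\widetilde{R^{L}_\sigma}})$ and need not be unwound. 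One could instead deduce the corollary from $r^{t}_{L,G}\circ D^{t}_{G}= D^{t}_{L}\circ r^{t}_{L,G}$ (Theorem $2.2.1$) applied to $\Theta_{\pi_1}$, but that would still require separately identifying the $\Pi_\sigma(L)$-projection of $r^{t}_{L,G}(\Theta_{\pi_1})$, so the diagram argument above is the more economical route.
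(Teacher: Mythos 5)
Your proof is correct and follows essentially the same route as the paper's own argument: the paper likewise starts from $St^{G}_{(M,\sigma)} = I^{G}\bigl(D_{\widetilde{R_\sigma},\chi_{\sigma}}(1_{\widetilde{R_\sigma}})\bigr)$, applies $r^{t}_{L,G}(\sigma)\circ I^{G}= I^{L}\circ\textsl{Res}^{\widetilde{R_\sigma}}_{\widetilde{R^{L}_\sigma}}$, pushes $\textsl{Res}$ past $D$ via Theorem 3.2.1, uses $\textsl{Res}^{\widetilde{R_\sigma}}_{\widetilde{R^{L}_\sigma}}(1_{\widetilde{R_\sigma}})= 1_{\widetilde{R^{L}_\sigma}}$, and concludes by the same identity read at the level of $L$. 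Your detour through $\xi_{\widetilde{R_\sigma}}$ and the remark on the role of the hypothesis $L\in\mathcal{L}_{A}(M)$ are harmless elaborations of the same computation.
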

\begin{proof} \textsl{It is clear that:
$$\bigstar\hspace{4cm}
St^{G}_{(M,\sigma)} = I^{G}(  D _ {\widetilde {R_\sigma}, \chi_{\sigma}}(1_{\widetilde{R_\sigma}})).\hspace{4.2cm}
$$
The composition with the projection $ r ^ {t}_{L, G}(\sigma)$, gives us:$$
r^{t}_{L,G}(\sigma)(St^{G}_{(M,\sigma)}) = r^{t}_{L,G}(\sigma)\circ I^{G} \circ D _ {\widetilde {R_\sigma}, \chi_{\sigma}}(1_{\widetilde{R_\sigma}} )=  I^{L}\circ \textsl{Res}^{\widetilde{R_\sigma}}_{\widetilde{R^{L}_\sigma}}  \circ D _ {\widetilde {R_\sigma}, \chi_{\sigma}}(1_{\widetilde{R_\sigma}})
$$
$$\hspace{3.5cm}=  I^{L}\circ D _ {\widetilde {R^L_\sigma}, \chi_{\sigma}}( \textsl{Res}^{\widetilde{R_\sigma}}_{\widetilde{R^{L}_\sigma}} (1_{\widetilde{R_\sigma}}))=  I^{L}(D _ {\widetilde {R^L_\sigma}, \chi_{\sigma}}(1_{\widetilde{R^L_\sigma}}))=  St_{(M,\sigma)}^{L} 
$$
where the last equality is due to the expression $\bigstar$ applied to $ L $.}
\end{proof}

\begin{Cor} \textsl{Under the hypotheses of the theorem $4.1$, we have :
 $$(St_{(M,\sigma)}^{G})^{e} =\pm (\Theta_{\pi_1})^{e}.$$}
\end{Cor}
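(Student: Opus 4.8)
The plan is to invoke the identity $St^{G}_{(M,\sigma)} = D^{t}_{G}(\Theta_{\pi_{1}})$, which is exactly the content of Theorem 4.1, and then to exploit the fact that the character of a representation parabolically induced from a \emph{proper} Levi subgroup vanishes identically on the regular elliptic set $G_{\textsl{ell}}$.

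Concretely, first I would write out the definition
$$D^{t}_{G} = \sum_{L\in\mathcal{L}}(-1)^{\dim(A_{L})}\, i_{G,L}\circ r^{t}_{L,G},$$
and separate the summand $L=G$, for which $i_{G,G}$ and $r^{t}_{G,G}$ are the identity; this yields
$$D^{t}_{G}(\Theta_{\pi_{1}}) = (-1)^{\dim(A_{G})}\,\Theta_{\pi_{1}} \;+\; \sum_{\substack{L\in\mathcal{L}\\ L\neq G}}(-1)^{\dim(A_{L})}\, i_{G,L}\bigl(r^{t}_{L,G}(\Theta_{\pi_{1}})\bigr).$$
Next I would restrict both sides to $G_{\textsl{ell}}$. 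For each $L\neq G$ the virtual character $i_{G,L}(r^{t}_{L,G}(\Theta_{\pi_{1}}))$ is, by definition of $i_{G,L}$ and linearity, a finite $\mathbb{C}$-linear combination of characters of representations induced from the proper parabolic with Levi component $L$; each such character vanishes on $G_{\textsl{ell}}$ (a classical property of parabolic induction, cf. [12] and the Weyl integration formula), hence $\bigl(i_{G,L}(r^{t}_{L,G}(\Theta_{\pi_{1}}))\bigr)^{e}=0$. Taking $(\,\cdot\,)^{e}$ of the displayed identity therefore kills every summand with $L\neq G$, and we obtain
$$\bigl(St^{G}_{(M,\sigma)}\bigr)^{e} = \bigl(D^{t}_{G}(\Theta_{\pi_{1}})\bigr)^{e} = (-1)^{\dim(A_{G})}\,(\Theta_{\pi_{1}})^{e} = \pm\,(\Theta_{\pi_{1}})^{e}.$$

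The only non-formal ingredient is the vanishing of a parabolically induced character on the regular elliptic set; everything else is an unwinding of the definition of $D^{t}_{G}$ together with the linearity of the restriction-to-$G_{\textsl{ell}}$ map. I expect this vanishing property to be the point to cite with care, but it is standard and is in any case implicit in the machinery of supertempered characters and of the functor $r^{t}$ used throughout the paper.
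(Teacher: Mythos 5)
Your proposal is correct and follows essentially the same route as the paper: identify $St^{G}_{(M,\sigma)}$ with $D^{t}_{G}(\Theta_{\pi_{1}})$ via Theorem 4.1, expand $D^{t}_{G}$ by its definition, and kill every properly induced summand by restricting to $G_{\textsl{ell}}$, leaving the $L=G$ term with sign $(-1)^{\dim(A_{G})}$. Your write-up is in fact slightly more explicit than the paper's in isolating the $L=G$ summand, but the argument is the same.
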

\begin{proof}
\textsl{We have $D^{t}_{G}(\Theta_{\pi_1}) = St_{(M,\sigma)}^{G}$, then by restriction to $G_{ell}$, we get:
$$
(St_{(M,\sigma)}^{G})^{e} = (D^{t}_{G}(\Theta_{\pi_1}))^{e} = \left( \sum_{L\in\mathcal{L}(M)}(-1)^{\dim(A_L)}~i_{G, L}\circ r^t_{L, G}(\Theta_{\pi_1})\right)^{e}
.$$
Since, the restriction of a character properly induced to $ G_{ell} $ is zero, then 
$$(D^{t}_{G}(\Theta_{\pi_1}))^{e}= \pm (\Theta_{\pi_1})^{e}$$}
\end{proof}

\begin{Prop} \textsl{Let $ (M, \sigma) $ be a discrete pair of $ G$. Then
 $St_{(M,\sigma)}^{G}$ is elliptic if and only if  $R_{\sigma}$ is elliptic.}
\end{Prop}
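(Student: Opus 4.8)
The plan is to reduce the statement, by means of the preceding corollary, to an ellipticity criterion for the representation $\pi_1$, and then to read that criterion off the inversion formula for $\Theta_{\pi_1}$. The preceding corollary gives $(St^{G}_{(M,\sigma)})^{e} = \pm(\Theta_{\pi_1})^{e}$, where $\pi_1 \in \Pi_\sigma(G)$ is the constituent corresponding to the trivial character; hence $St^{G}_{(M,\sigma)}$ is elliptic if and only if $(\Theta_{\pi_1})^{e} \neq 0$, i.e. if and only if $\pi_1$ is elliptic. So it suffices to prove that $\pi_1$ is elliptic precisely when $R_\sigma$ is elliptic, that is, precisely when $\widetilde{R_{\sigma, \textsl{reg}}} \neq \emptyset$.

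To this end I would start from the inversion formula $\Theta_{\pi_1} = |\widetilde{R_\sigma}|^{-1}\sum_{\tilde r \in \widetilde{R_\sigma}}\theta_1(\tilde r)\,\Theta(M,\sigma,\tilde r)$ and restrict it to $G_{ell}$. For $\tilde r$ lying over $r \in R_\sigma$ one has, exactly as in the proof of Lemma 3.1, $\Theta(M,\sigma,\tilde r) = i_{G,L_r}(\Theta^{L_r}(M,\sigma,\tilde r))$ with $\mathfrak a^{r}_{M} = \mathfrak a_{L_r}$; since the restriction to $G_{ell}$ of a properly induced character vanishes (as already used in the proof of the preceding corollary), only the terms with $L_r = G$ survive, i.e. those with $\tilde r \in \widetilde{R_{\sigma, \textsl{reg}}}$. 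Grouping these by $Z_\sigma$-orbits and by $\widetilde{R_\sigma}$-conjugacy classes — using $\theta_1(z\tilde r)\,\Theta(z(M,\sigma,\tilde r)) = \theta_1(\tilde r)\,\Theta(M,\sigma,\tilde r)$ for $z \in Z_\sigma$, the fact that $\theta_1$ is a class function, and the fact that $\widetilde{R_{\sigma, \textsl{reg}}}$ is a union of conjugacy classes — one obtains that $(\Theta_{\pi_1})^{e}$ equals a linear combination, with \emph{non-zero} coefficients (each of the form $|\widetilde{R_\sigma}|^{-1}|cl(\tilde r)|\,\theta_1(\tilde r)$, non-vanishing because $\theta_1$ is a one-dimensional character), of the virtual characters $\Theta(M,\sigma,\tilde r)^{e}$ attached to the essential regular triplets $(M,\sigma,\tilde r)$ — the non-essential triplets drop out, since for them $\Theta(M,\sigma,\tilde r)=0$.

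Both implications then follow. If $R_\sigma$ is not elliptic, then $\widetilde{R_{\sigma, \textsl{reg}}} = \emptyset$, the sum above is empty, $(\Theta_{\pi_1})^{e} = 0$, and $St^{G}_{(M,\sigma)}$ is not elliptic. Conversely, if $R_\sigma$ is elliptic, each $\Theta(M,\sigma,\tilde r)$ with $\tilde r \in \widetilde{R_{\sigma, \textsl{reg}}}$ essential is supertempered [14], and the restrictions to $G_{ell}$ of the (non-zero) virtual characters attached to essential regular triplets are linearly independent — they form part of a basis of $\mathcal V(G)$ by [1, \S3] and lie in the subspace $\mathcal V_{st}(G)$ of supertempered characters, on which the restriction map $\Theta\mapsto\Theta^{e}$ is injective ([5], [14]); hence the $\Theta(M,\sigma,\tilde r)^{e}$ that occur in the combination above are linearly independent. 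A linear combination with non-zero coefficients of a non-empty linearly independent family is non-zero, so $(\Theta_{\pi_1})^{e} \neq 0$, and $\pi_1$, together with $St^{G}_{(M,\sigma)}$, is elliptic.

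The one point where I expect genuine work is the converse implication, and within it the claim that $\widetilde{R_{\sigma, \textsl{reg}}} \neq \emptyset$ forces the existence of at least one \emph{essential} regular triplet $(M,\sigma,\tilde r)$, so that the indexing family in the combination above is non-empty. For a general $\rho$ this is a real constraint on $\theta_\rho$, but for the trivial character $\theta_1$, whose values are roots of unity, it should follow from Arthur's analysis of essential triplets [1, \S3] together with the supertempered formalism of [14]; I would extract it as a separate lemma. The remaining ingredients — the reduction through the preceding corollary, the shape of $\Theta(M,\sigma,\tilde r)$ on $G_{ell}$, and the linear independence of the relevant $\Theta(M,\sigma,\tilde r)^{e}$ — are routine given what is already recorded in the paper.
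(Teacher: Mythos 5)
Your reduction through Corollary 4.4 is fine, and it gives the easy direction completely: if $R_\sigma$ is not elliptic, every $\tilde r\in\widetilde{R_\sigma}$ has $L_r\neq G$, each $\Theta(M,\sigma,\tilde r)$ in the inversion formula is properly induced, its restriction to $G_{\mathrm{ell}}$ vanishes, so $(\Theta_{\pi_1})^{e}=0$ and $St^{G}_{(M,\sigma)}$ is not elliptic. The converse, however, is not proved as written. You yourself defer the decisive point --- that $\widetilde{R}_{\sigma,\mathrm{reg}}\neq\emptyset$ yields at least one \emph{essential} regular triplet, so that the surviving combination is non-empty --- to an unproved ``separate lemma''; and the other load-bearing ingredient, the linear independence of the elliptic restrictions $\Theta(M,\sigma,\tilde r)^{e}$ attached to essential elliptic triplets (equivalently, the injectivity of $\Theta\mapsto\Theta^{e}$ on the relevant supertempered span), appears nowhere in the paper and is in substance Arthur's elliptic orthogonality, i.e.\ precisely the content of the result the paper quotes as [1, Proposition 2.1]. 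So the hard half of the equivalence in your proposal rests on two statements you have not established, one of which \emph{is} the cited theorem: you are re-deriving Arthur's ellipticity criterion rather than using it.

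The paper's own proof is two lines and avoids all of this: by Theorem 4.1 and Remark 4.2, $St^{G}_{(M,\sigma)}=I^{G}(\xi_{\widetilde{R_\sigma}})$, and [1, Proposition 2.1] says that an irreducible constituent $I^{G}(\theta_\rho)$ is elliptic if and only if $\theta_\rho$ does not vanish identically on $\widetilde{R}_{\sigma,\mathrm{reg}}$; since $\xi_{\widetilde{R_\sigma}}(\tilde r)=(-1)^{\dim(A_{L_r})}=\pm1$ (and equals $(-1)^{\dim(A_G)}$ on regular elements), this happens exactly when $\widetilde{R}_{\sigma,\mathrm{reg}}\neq\emptyset$, i.e.\ exactly when $R_\sigma$ is elliptic. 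If you want to keep your route, note that the lemma you flag is actually easy: for $\rho\in\Pi(\widetilde{R_\sigma},\chi_\sigma)$ the relations $\theta_\rho(z\tilde r)=\chi_\sigma(z)\theta_\rho(\tilde r)$ together with conjugation-invariance of $\theta_\rho$ force $\theta_\rho(\tilde r)=0$ whenever the triplet $(M,\sigma,\tilde r)$ is non-essential, so a nowhere-vanishing character such as $\theta_1$ (or $\xi_{\widetilde{R_\sigma}}$) meets only essential classes. But since the linear-independence step still amounts to invoking Arthur, the clean repair is simply to cite [1, Proposition 2.1] --- either applied to $\theta_1$ after your reduction via Corollary 4.4, or, as the paper does, applied directly to $\xi_{\widetilde{R_\sigma}}$ with no reduction at all.
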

\begin{proof}\textsl{Since $St_{(M,\sigma)}^{G}$ is an irreducible character of $i_{G,M}(\Theta_{\sigma})$ and corresponding to $\hspace {0,2cm} \xi_{\widetilde{R_\sigma}}$, then  from $[1; \textsl{Proposition}~ 2.1]$, $St_{(M,\sigma)}^{G}$ is elliptic if and only if the restriction of $\xi_{\widetilde{R_\sigma}}$ to $\widetilde{R_{\sigma,\textsl{reg}}}$ is non-zero, or for all $\tilde{r}\in\widetilde{R_{\sigma, \textsl{reg}}}$: $$\xi_{\widetilde{R_\sigma}}(\tilde{r})=  (-1)^{\dim(A_{G})},$$ hence the result.}
\end{proof}
\vskip 2em
 We end this note with a direct application. 
 We know that the choice of a minimum parabolic subgroup $ P_{0} $ of $ G $ determines a base $ \Delta$ of the set of reduced roots $\Phi(G, A_0)$. If $ I \subset \Delta $, let $ M_{I} $
the Levi subgroup of $G$ defined by $I.$ \hspace {0,1cm} We have the following proposition.

\begin{Prop}\textsl{Let $ (M_I, \sigma) $ be a discrete pair of $ G$, where $\hspace {0,2cm} I\subset \Delta.$
If  $I = \Delta - \{\alpha\}$, for a simple root $\alpha$ and $i_{G,M_{I}}(\Theta_{\sigma})$ is reducible, then $\Theta_{\pi_1}$ and  $\pm St_{(M_{I},\sigma)}^{G}$ are the only irreducible characters
 of $i_{G,M_{I}}(\Theta_{\sigma}).$\hspace{0,1cm} Moreover these two irreducible characters are elliptic.}
\end{Prop}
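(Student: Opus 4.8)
The plan is to exploit the hypothesis $I=\Delta-\{\alpha\}$, which forces $M_I$ to be a \emph{maximal} proper Levi subgroup of $G$, so that all the combinatorics reduces to that of a group of order two. First I would record the three consequences of maximality that drive the argument: $\dim A_{M_I}=\dim A_G+1$; $\mathcal{L}(M_I)=\{M_I,G\}$, since no Levi lies strictly between a maximal proper Levi and $G$; and $|W^G(M_I)|\le 2$, because $W^G(M_I)=N_G(A_{M_I})/M_I$ acts faithfully on $\mathfrak{a}_{M_I}$ while fixing $\mathfrak{a}_G$ pointwise, hence acts faithfully on the one\nobreakdash-dimensional space $\mathfrak{a}_{M_I}/\mathfrak{a}_G$. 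In particular $R_\sigma\subset W^G(M_I)$ has order at most $2$. Since $\mathcal{C}(\sigma)\cong\mathbb{C}[R_\sigma]_\eta$, the assumed reducibility of $i_{G,M_I}(\Theta_\sigma)$ excludes $R_\sigma=\{1\}$ (which would make $\mathcal{C}(\sigma)\cong\mathbb{C}$ and the induced module irreducible), so $R_\sigma=\{1,r\}\cong\mathbb{Z}/2\mathbb{Z}$; being a two\nobreakdash-dimensional commutative semisimple algebra, $\mathcal{C}(\sigma)\cong\mathbb{C}\times\mathbb{C}$, the cocycle $\eta$ is cohomologically trivial, $\Pi(\widetilde{R_\sigma},\chi_\sigma)$ consists exactly of the trivial and the sign character, and therefore $i_{G,M_I}(\Theta_\sigma)=\Theta_{\pi_1}+\Theta_{\pi_2}$ with $\pi_1\not\simeq\pi_2$, each of multiplicity one, $\pi_1$ corresponding to $1_{\widetilde{R_\sigma}}$.

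The core of the proof is then a direct evaluation of the defining formula for $St^G_{(M_I,\sigma)}$. Because $\mathcal{L}(M_I)=\{M_I,G\}$ and both $M_I$ and $G$ satisfy Arthur's compatibility condition, $\mathcal{L}_A(M_I)=\{M_I,G\}$, so the sum defining $St^G_{(M_I,\sigma)}$ has only two terms. For $L=M_I$ one has $R_\sigma^{M_I}=R_\sigma\cap W^{M_I}=\{1\}$, hence $\Pi_\sigma(M_I)=\{\sigma\}$ and the representation $\tau_1$ attached to $1_{\widetilde{R_\sigma^{M_I}}}$ is $\sigma$ itself; for $L=G$ it is $\pi_1$. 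Substituting and using $(-1)^{\dim A_{M_I}}=-(-1)^{\dim A_G}$ together with $i_{G,M_I}(\Theta_\sigma)=\Theta_{\pi_1}+\Theta_{\pi_2}$, the two occurrences of $\Theta_{\pi_1}$ cancel and one obtains $St^G_{(M_I,\sigma)}=(-1)^{\dim A_G+1}\,\Theta_{\pi_2}=\pm\Theta_{\pi_2}$. This identifies $\pm St^G_{(M_I,\sigma)}$ as the second constituent and shows that $\Theta_{\pi_1}$ and $\pm St^G_{(M_I,\sigma)}$ exhaust the irreducible characters of $i_{G,M_I}(\Theta_\sigma)$; it is also consistent with Theorem~4.1, which gives $St^G_{(M_I,\sigma)}=D^t_G(\Theta_{\pi_1})$ and hence $\pm$ an irreducible character.

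For ellipticity I would note that the nontrivial element $r\in R_\sigma$ acts by $-1$ on $\mathfrak{a}_{M_I}/\mathfrak{a}_G$, so $\mathfrak{a}_{M_I}^{\,r}=\mathfrak{a}_G$ and thus $r\in R_{\sigma,\textsl{reg}}$; in particular $R_{\sigma,\textsl{reg}}\neq\emptyset$, i.e. $R_\sigma$ is elliptic. By the preceding proposition ($St^G_{(M,\sigma)}$ is elliptic if and only if $R_\sigma$ is), $St^G_{(M_I,\sigma)}$ — equivalently $\pi_2$ — is elliptic, and then the corollary $(St^G_{(M,\sigma)})^e=\pm(\Theta_{\pi_1})^e$ forces $(\Theta_{\pi_1})^e\neq 0$, so $\pi_1$ is elliptic as well. (Alternatively, both ellipticities follow at once from $[1,\text{Proposition }2.1]$: each irreducible character of $\mathbb{Z}/2\mathbb{Z}$ is nonzero on the nontrivial element, which is here the only regular element of $R_\sigma$.)

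The one point requiring genuine care is the bookkeeping in the second paragraph: one must be sure that $\mathcal{L}_A(M_I)$ really is $\{M_I,G\}$ so that the defining sum has exactly two terms, and that the $L=M_I$ term is $i_{G,M_I}(\Theta_\sigma)$ in full — not a proper sub\nobreakdash-sum of its constituents — i.e. that $R_\sigma^{M_I}$ is trivial. Both facts are immediate from the maximality of $M_I$, but they should be spelled out; once they are in place, the bound $|R_\sigma|\le 2$, the trivialization of the cocycle, and the cancellation in the defining formula are all automatic.
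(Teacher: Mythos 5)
Your proposal is correct and follows essentially the same route as the paper: the paper's (much terser) proof likewise deduces the exhaustion of the constituents from Theorem~4.1 together with the defining decomposition of $St^{G}_{(M_I,\sigma)}$, and gets ellipticity from Corollary~4.4 and Proposition~4.5 via $R_{\sigma,\textsl{reg}}\neq\varnothing$ (the paper names the regular element $s_\alpha$, which is your nontrivial $r$). You have merely made explicit the bookkeeping the paper leaves implicit — maximality of $M_I$, $|R_\sigma|=2$, triviality of the cocycle, $\mathcal{L}_A(M_I)=\{M_I,G\}$, and the cancellation giving $St^{G}_{(M_I,\sigma)}=\pm\Theta_{\pi_2}$ — all of which is sound.
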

\begin{proof}\textsl{If $i_{G,M_{I}}(\Theta_{\sigma})$ is reductible, then the last theorem and the  decomposition of $St_{(M_{I},\sigma)}^{G}$ show that $\Theta_{\pi_1}$ and $\pm St_{(M_{I},\sigma)}^{G}$ are the only irreducibles characters of $i_{G,M_{I}}(\Theta_{\sigma}).$\hspace{0,1cm}On the other hand, if $i_{G,M_{I}}(\Theta_{\sigma})$ is reductible then the reflexion 
 $s_{\alpha} \in R_{\sigma}$, it follow from corollaire $4.4$ and proposition $4.5$ that the 
 irreducibles characters $\Theta_{\pi_1}$ and $\pm St_{(M_{I},\sigma)}^{G}$ are elliptics because $R_{\sigma,\textsl{reg}}\neq \varnothing$ ($s_{\alpha} \in R_{\sigma,\textsl{reg}}$).}
\end{proof}

\begin{Rem}\textsl{We fix a discrete pair $(M,\sigma)$ of $G.$\hspace{0,1cm}We saw that $St_{(M,\sigma)}^{G}$ is an irreducible character of $i_{G,M}(\Theta_{\sigma})$ and it is  elliptic if and only if  $R_{\sigma}$ is also elliptic. With the proposition $4.6$, we want to extend this result to all the characters of $i_{G,M}(\Theta_{\sigma})$ and thus get out of the choices imposed by J.Arthur ( fixed central extension over which the cocycle $\chi_{\sigma}$ splits, essential virtual triplet, positive chamber, ...), as in $[5]$ for the classification of irreducible, tempered representations (of reductive p-adic groups).}
\end{Rem}
\vskip 2em
\maketitle

\end{document}